\DeclareFontFamily{U}{yswab}{}	
\DeclareFontShape{U}{yswab}{m}{n}{<-> yswab}{}
\numberwithin{equation}{section}
\newcommand{\be}{\begin{equation}}
\newcommand{\ee}{\end{equation}}
\def\bE{\mathbb{E}}
\def\bN{\mathbb{N}}
\def\bP{\mathbb{P}}
\def\bR{\mathbb{R}}
\def\bZ{\mathbb{Z}}
 \def\Z{\bZ} 
\def\R{\bR}
\def\N{\bN}
\def\e{\varepsilon}
\def\E{\bE}
\def\P{\bP} 
\definecolor{darkgreen}{rgb}{0.0,0.5,0.0}
\definecolor{darkblue}{rgb}{0.0,0.0,0.3}
\definecolor{nicosred}{rgb}{0.65,0.1,0.1}
\definecolor{light-gray}{gray}{0.7}
\begin{document}

\title{A stylized model for wealth distribution
}


\author{Bertram D\"uring \and Nicos Georgiou \and Enrico Scalas}


\institute{Bertram D\"uring \at
            Mathematics Institute, University of Warwick, UK \\
              \email{bertram.during@warwick.ac.uk}           \\
           \and
           Nicos Georgiou \at
             Department of Mathematics, University of Sussex, UK \\
\email{n.georgiou@sussex.ac.uk}  \\
\and
Enrico Scalas \at 
Department of Mathematics, University of Sussex, UK\\
\email{e.scalas@sussex.ac.uk}}

\date{}

\maketitle

\begin{abstract}
The recent book by T. Piketty (Capital in the Twenty-First Century) promoted the important issue of wealth inequality. In the last twenty years, physicists and mathematicians developed models to derive the wealth distribution using discrete and continuous stochastic processes (random exchange models) as well as related Boltzmann-type kinetic equations. In this literature, the usual concept of equilibrium in Economics is either replaced or completed by statistical equilibrium.

In order to illustrate this activity with a concrete example, we present a stylised random exchange model for the distribution of wealth. We first discuss a fully discrete version (a Markov chain with finite state space). We then study its discrete-time continuous-state-space version and we prove the existence of the equilibrium distribution. Finally, we discuss the connection of these models with Boltzmann-like kinetic equations for the marginal distribution of wealth. This paper shows in practice how it is possible to start from a finitary description and connect it to continuous models following Boltzmann's original research program.
\keywords{Wealth distribution \and Stochastic processes \and Markov chains \and Kinetic equations}
\subclass{60J05 \and 60J10 \and 60J20 \and 82B31 \and 82B40}
\end{abstract}

\section{Introduction}

The recent book by T. Piketty \cite{Piketty} shifted the general attention as well as the attention of economists towards the important issue of wealth inequality. The question ``Why is there wealth inequality?" 
has attracted the attention of a diverse set of researchers, including economists, physicists and mathematicians. Particularly, 
during the last twenty years, physicists and mathematicians developed models to theoretically derive the wealth distribution using tools of statistical physics and probability theory: 
discrete and continuous stochastic processes (random exchange models) as well as related Boltzmann-type kinetic equations. In this framework, 
the usual concept of equilibrium in Economics is complemented or replaced by statistical equilibrium \cite{garibaldiscalas}. 

The original work of Pareto concerned the distribution of income \cite{Pareto}. Pareto observed a skewed distribution with power-law tail. However, he also dealt with the distribution of wealth, for which he wrote:
\begin{quote}
La r\'epartition de la richesse peut d\'ependre de la nature des hommes dont se compose la societ\'e, de l'organisation de celle-ci, et aussi, en partie, du {\em hasard} (les {\em conjonctures} de Lassalle), [...]
\end{quote}
\begin{quote}
The distribution of wealth can depend on the nature of those who make up society, on the social organization and, also, in part, on {\em chance}, (the {\em conjunctures} of Lassalle), [...]
\end{quote}
More recently, Champernowne \cite{Champernowne}, Simon \cite{Simon},
Wold and Whittle \cite{Wold} as well as Mandelbrot \cite{Mandelbrot}
used random processes to derive distributions for income and
wealth. Starting from the late 1980s and publishing in the
sociological literature, Angle introduced the so-called inequality
process, a continuous-space discrete time Markov chain for the
distribution of wealth based on the surplus theory of social
stratification \cite{Angle}. However, the interest of physicists and
mathematicians was triggered by a paper written by Dr\v{a}gulescu and
Yakovenko \cite{Yakovenko} in 2000 and explicitly relating random
exchange models with statistical physics. Among other things, they
discussed a simple random exchange model already published in Italian
by Bennati \cite{Bennati}. An exact solution of that model was
published in \cite{Donadio}. Lux wrote an early review of the
statistical physics literature up to 2005 \cite{Lux}. An extensive
review was written by Chakrabarti and Chackrabarti in 2010
\cite{Chakrabarti}. Boltzmann-like kinetic equations for the marginal
distribution of wealth were studied in \cite{CPT05} and several other
works, we refer to the review article \cite{DMT09} and the book
\cite{ParTosBook}, and the references therein. 

We will focus on the essentials of random modelling for wealth distributions and we will explicitly show how continuous-space Markov chains 
can be derived from discrete-space (actually finite-space) chains. We will then focus on the stability properties of these chains and, 
finally, we will review the mathematical literature on kinetic equations while studying the kinetic equation related to the Markov chains. 
In doing so, we will deal with a stylized model for the time evolution of wealth (a {\em stock}) and not of income (a {\em flow}).

Distributional problems in Economics can be presented in a rather general form. Assume one has $N$ economic agents, each one
endowed with his/her stock (for instance wealth) $w_i \geq 0$. Let 
$W = \sum_{i=1}^N w_i $ 
be the total wealth of the set of agents. Consider the random
variable $W_i$, i.e. the stock of agent $i$. One is interested in the distribution of the vector $(W_1, \ldots, W_N)$
as well as in the marginal distribution $W_1$ if all agents are on a par (exchangeable).
The transformation
\begin{equation}
\label{fraction}
X_i = \frac{W_i}{W},
\end{equation}
normalises the total wealth of the system to be equal to one since 
\begin{equation}
\label{sum}
\sum_{i=1}^N X_i = 1
\end{equation}
and the vector $(X_1, \ldots, X_N)$ is a finite random partition of the interval $(0,1)$. The $X_i$s are called {\em spacings} of the partition.

The following remarks are useful and justify this simplified modelling of wealth distribution.

\begin{enumerate}

	\item If the stock $w_i$ represents wealth, it can be negative due to indebtedness. In this 		case, one can always shift the wealth to non-negative values by
		subtracting the negative wealth with largest absolute value.

	\item A mass partition is an infinite sequence $\mathbf{s} = (s_1, s_2, \ldots)$ such that 		$s_1 \geq s_2 \geq \ldots \geq 0$ and $\sum_{i=1}^\infty s_i \leq 1$.

	\item Finite random interval partitions can be 
	mapped into mass partitions, just by ranking the 
	spacings and adding an infinite sequence of $0$s.

\end{enumerate}

The vector $\mathbf{X} = (X_1, \ldots, X_N)$ lives on the $N-1$ dimensional simplex $\Delta_{N-1}$, defined by

\begin{definition}[The simplex $\Delta_{N-1}$]
\begin{equation}
\label{simplex}
\Delta_{N-1} = \left \{ \mathbf{x} = (x_1, \ldots, x_N): \, x_i \geq 0 \, \text{ for all } i = 1, \ldots, N \text{ and } \sum_{i=1}^N x_i = 1 \right \}.
\end{equation}

\end{definition} 
There are two natural questions that immediately arise from defining such a model.
\begin{enumerate}

	\item Which is the distribution of the vector 
	$(X_1, \ldots, X_N)$ with $X_i$ given by (\ref{fraction}) at a given time?

	\item Which is the distribution of the random variable 
	$X_1$, the proportion of the wealth of a single individual?

\end{enumerate} 

One well-studied probabilistic example is to  set the vector $(W_1, \ldots, W_N)$ of i.i.d.~random variables such that $W_i \sim$ gamma$(\alpha_i, \lambda)$. Then 
$W = \sum_{i=1}^N W_i \sim$  gamma$\left(\sum_{i=1}^N \alpha_i,\lambda\right)$. 
In this case the mass function of $(X_1, \ldots, X_N)$ is the Dirichlet distribution, given by

\begin{equation}
\label{dirichlet}
f_{\mathbf{X}}(\mathbf{x}) = \frac{\Gamma(\alpha_1 + \cdots + \alpha_N)}{\Gamma(\alpha_1) \cdots \Gamma(\alpha_N)} x_1^{\alpha_1 -1} \cdots x_N^{\alpha_N -1}, \quad \mathbf{x} = (x_1, \ldots, x_N) \in \Delta_{N-1}.
\end{equation}
 
 We say $\mathbf{X} \sim$ Dir$_{N-1} (\alpha_1, \ldots, \alpha_N)$ and the parameters $\alpha_1, \ldots, \alpha_N $ are assumed strictly positive as they can be interpreted as the shapes of gamma random variables. A particular case is when 
 $\alpha_1 = \cdots = \alpha_n = \alpha$. Then the Dirichlet distribution is called symmetric. The symmetric Dirichlet distribution with $\alpha=1$ is uniform on the
simplex $\Delta_{N-1}$.

One can now answer the two questions above using the following proposition which we present in its simplest form.

\begin{proposition}
\label{uniformdirichlet}
Let $(W_1, \ldots, W_N)$ of i.i.d. random variables such that $W_i \sim \exp(1)$. Then 
$W = \sum_{i=1}^N W_i \sim$ {\em gamma}$(N,1)$. Define $X_i = W_i/W$, then the vector
$\mathbf{X} = (X_1, \ldots, X_N)$ has the uniform distribution on the simplex $\Delta_{N-1}$
and  one dimensional marginals $X_1 \sim$ {\em beta}$(1,N-1)$, namely
\begin{equation}
\label{betadistribution}
f_{X_1} (x) = \frac{(1-x)^{N-2}}{B(1,N-1)},
\end{equation}
where, for $a,b > 0$,
\begin{equation}
\label{betafunction}
B(a,b) = \frac{\Gamma(a) \Gamma(b)}{\Gamma(a+b)}.
\end{equation}
\end{proposition}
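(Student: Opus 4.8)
The plan is to reduce everything to the Dirichlet law already recorded in \eqref{dirichlet}, using the single observation that $\exp(1)$ is precisely the $\mathrm{gamma}(1,1)$ distribution. First I would dispatch the statement about $W$: the moment generating function of one $\exp(1)$ variable is $(1-t)^{-1}$ for $t<1$, so by independence $\E[e^{tW}] = (1-t)^{-N}$, which is the moment generating function of $\mathrm{gamma}(N,1)$. (Equivalently one convolves $N$ exponential densities.)

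For the uniformity claim the key point is that the hypotheses are exactly those producing \eqref{dirichlet} with $\alpha_1=\cdots=\alpha_N=1$. Substituting these values, every factor $x_i^{\alpha_i-1}$ becomes $x_i^{0}=1$ and the normalising constant collapses to $\Gamma(N)/\Gamma(1)^N=(N-1)!$. Hence the density of $\mathbf{X}$ is the constant $(N-1)!$ on $\Delta_{N-1}$, and a constant density is by definition the uniform law (the constant $(N-1)!$ being the reciprocal of the Lebesgue volume $1/(N-1)!$ of the simplex expressed in the coordinates $x_1,\dots,x_{N-1}$ with $x_N=1-\sum_{i<N}x_i$). If one prefers a self-contained derivation rather than citing \eqref{dirichlet}, one performs the change of variables $(w_1,\dots,w_N)\mapsto(x_1,\dots,x_{N-1},w)$ with $x_i=w_i/w$ and $w=\sum_j w_j$; the Jacobian of the inverse map $w_i=x_i w$ is $w^{N-1}$, and factoring the product density $e^{-w}$ and integrating $w$ against the $\mathrm{gamma}(N,1)$ normalisation yields the same constant. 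Keeping track of this Jacobian is the only place where real care is needed.

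Finally, for the one-dimensional marginal I would use the beta--gamma algebra rather than integrate the joint density directly. Write $W=W_1+V$ where $V=\sum_{i=2}^N W_i$; by the first step $V\sim\mathrm{gamma}(N-1,1)$, and $V$ is independent of $W_1\sim\mathrm{gamma}(1,1)$. The standard fact that $A/(A+B)\sim\mathrm{beta}(a,b)$ whenever $A\sim\mathrm{gamma}(a,1)$ and $B\sim\mathrm{gamma}(b,1)$ are independent then gives $X_1=W_1/(W_1+V)\sim\mathrm{beta}(1,N-1)$. Inserting $a=1$, $b=N-1$ into the beta density $x^{a-1}(1-x)^{b-1}/B(a,b)$ reproduces \eqref{betadistribution}. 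The main obstacle in the whole argument is essentially bookkeeping---either the Jacobian factor $w^{N-1}$, or equivalently the verification of the beta--gamma identity---since once the shape parameters are all set to $1$ the Dirichlet structure makes the conclusion immediate.
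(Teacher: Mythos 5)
Your proposal is correct and takes essentially the same route as the paper, whose own ``proof'' consists of citing Devroye (Theorem 4.1) for the uniformity claim and invoking the aggregation property of the Dirichlet distribution for the marginal \eqref{betadistribution}. You simply supply those ingredients explicitly---the Dirichlet density \eqref{dirichlet} specialised to $\alpha_1=\cdots=\alpha_N=1$ (or the equivalent change of variables with Jacobian $w^{N-1}$) for uniformity, and the beta--gamma identity, which is precisely the aggregation property in this special case, for $X_1\sim\mathrm{beta}(1,N-1)$---so your write-up is a correct, filled-in version of the paper's citation-style argument.
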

The proof of this proposition can be found in several textbooks of probability and statistics including Devroye's book \cite{Devroye}. Specifically, the part of proposition \ref{uniformdirichlet} concerning the uniform distribution is a corollary of Theorem 4.1 in \cite{Devroye}. Equation \eqref{betadistribution} is a direct consequence of the aggregation property of the Dirichlet distribution.

In this chapter, we define three related models that incorporate a stochastic time evolution for the agent wealth distribution. The models increase in mathematical complexity in the order they are presented. 

The first one is a discrete time-discrete (DD) space Markov chain with a P\'olya limiting invariant distribution. We keep the dynamics as simple as possible so in fact the invariant distribution will be uniform (not a generic P\'olya distribution) but the ideas and techniques are the same for more complicated versions. The Markov chain of the DD model is then generalised to a discrete time-continuous space  (DC) Markov chain. The extension is natural in the sense that the dynamics, irreducibility and the invariant distribution of the DC model can be viewed as limiting case of the DD model. In the process, we effectively prove that Monte Carlo algorithms will approximate the DC model well. Finally, we present a continuous-continuous space (CC) model for which the temporal evolution of the (random) wealth of a single individual is governed by a Boltzmann type equation. 

\section{Random dynamics on the simplex}


In order to define our simple models, we first introduce two types of moves on the simplex.

\begin{definition}[Coagulation]
By coagulation, we denote the aggregation of the stocks of two or more agents into a single stock. This can happen in mergers,
acquisitions and so on.
\end{definition}

\begin{definition}[Fragmentation]
By fragmentation, we denote the division of the stock of one agent into two or more stocks. This can happen in inheritance, failure and so on.
\end{definition}

\subsection{Discrete time - continuous space model: Coagulation-fragmentation dynamics}
\label{sec:DC1}

Before introducing the DD model, let us define the main object of our study: The DC model.

At each event time, the state of the process $\mathbf{X} \in \Delta_{N-1}$ changes according to a composition of one coagulation and one fragmentation step. 

To be precise, let $\mathbf{X} =\mathbf{x}$ be the current value of the random variable $\mathbf{X}$. For any ordered pair of indices $ i, j$, $1 \le i , j \le N$, chosen uniformly at random, define the coagulation application 
$\text{coag}_{ij} (\mathbf{x}): \Delta_{N-1} \to \Delta_{N-2}$ by creating a new agent with
stock $x = x_i + x_j$ while the proportion of wealth for all others remain uncganged. 
Next enforce a random fragmentation application $\text{frag}(\mathbf{x}): \Delta_{N-2} \to \Delta_{N-1}$
that takes $x$ defined above and splits it into two parts as follows. Given $u \in (0,1)$ drawn from the uniform distribution $U[0,1]$,
set $x_i = u x$ and $x_j = (1-u)x$.  

The sequence of coagulation and fragmentation operators defines a time-homogeneous Markov chain on the simplex $\Delta_{N-1}$.
Let $\mathbf{x}(t) = (x_1(t), \ldots, x_i (t), \ldots, x_j (t), \ldots, x_N (t))$ be the state of the chain at time $t$ with $i$ and $j$ denoting the selected indices.
Then the state at time $t+1$ is \[\mathbf{x}(t+1) = (x_1(t+1), \ldots, x_i (t+1) = u(x_i(t)+x_j(t)), \ldots, x_j (t+1) = (1-u)(x_i (t) + x_j (t)), \ldots, x_N (t)).\]

The Markov kernel for this process is however degenerate because each step only affects a Lebesgue measure 0 of the simplex. To avoid this technical complication for the moment, we define the same dynamics on the discrete simplex and we then analyse the DC model.

\subsection{Discrete time - discrete space model}

 Let $N$ denote the number of categories (individuals) into which $n$ objects (coins or tokens) are classified \cite{garibaldiscalas}. 
In the frequency or statistical description of this system, a state is a list $\mathbf{n} = (n_1, \ldots, n_N)$ with
$\sum_{i=1}^N n_i = n$ which gives the number of objects belonging to each category. In this framework, a coagulation move is 
defined by picking up a pair of  ordered integers $i, j$ at random without replacement from $1\le \ldots \le N$ and creating a new category with $n_i+n_j$ objects. 
A fragmentation move takes this category and splits it into two new categories relabeled $i$ and $j$ where $n'_i$ is a uniform random integer between $0$ and $n_i+n_j $ and $n'_j=n_i+n_j - n'_i$. The state of the process at time $t \in \N_0$ is denoted by  $\mathbf{X}(t)$ and its state space is  the scaled integer simplex
	\[ 
	S_{N-1}^{(n)} = n\Delta_{N-1}\cap\Z^N = \left\{ \mathbf n = (n_1, n_2, \ldots, n_N) : 0 \le n_i \le n,\quad \sum_{i=1}^Nn_i = n, \quad n_i \in \N_0.\right\}.
	\]



Formally, with coagulation, we move from the state space $S_{N-1}^{(n)}$ to $S_{N-2}^{(n)}$ and then again with fragmentation, we come back to 
$S_{N-1}^{(n)}$. While it is interesting to actually study all stages of the procedure, we are  only interested in the aggregated wealth and therefore we can by-pass the 
intermediate state space by defining the process only on  $S_{N-1}^{(n)}$; it is straight forward to write down the transition probabilities for $\mathbf{X}(t)$
\begin{align}
\label{transition}
\mathbb{P}&\{\mathbf{X}(t+1)= \mathbf{n}'|\mathbf{X}(t)=\mathbf{n}\} \notag\\
&=\sum_{i,j : i \neq j}\left\{  \frac{1}{N} \frac{1}{N-1}\frac{1}{n_i+n_j+1} 
\delta_{n_i+n_j,n'_i+n'_j} \prod_{k \neq i,j} \delta_{n'_k,n_k} \right\}. 
\end{align}
The notation is shorthand and implies that we are adding over all ordered pairs $(i,j), i\neq j$ where the first coordinate indicates the index $i$ that was selected first. This model is symmetric in $\bf n$, $\bf n'$. 


The chain is \textit{time-homogeneous} as the transition \eqref{transition} is independent of the time parameter $t$. 
It is also \textit{aperiodic} since with positive probability, during each time step, the chain may 
coagulate and then fragment to the same state. To see this, consider any vector 
$(X_1, \ldots, X_N) = (x_1, \ldots, x_N)$ on the simplex. It must have at least one non-zero entry, say $x_1 >0$. Select index $i =1$ first (with probability $N^{-1}$), then select any other index $j$. 
After that fragment at precisely $x_1, x_j$ (with probability $1/(x_1 + x_j +1)> 0$). Finally, the chain is \textit{irreducible}, since from any point 
$\mathbf{X} = (x_1, \dots, x_N)$ the chain can move with positive probability to any of the neighbouring  $((x_1, \dots, x_N) \pm (e_i-e_j) )\cap S_{N-1}^{(n)}$, i.e. to any point in the simplex, that is $\ell^1$-distance 2 away from the current state. 
Therefore, we can conclude that the chain $\{\mathbf{X}(t)\}_{t \in \N_0}$ has a unique equilibrium distribution $\mathbf{\pi}$ which we identify in the next proposition.

\begin{proposition}
 The invariant distribution of this Markov chain $\mathbf{X}(t)$ is the uniform distribution on 
 $n\Delta_{N-1}\cap\Z^N$.
\end{proposition}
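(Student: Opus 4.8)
The plan is to exhibit the uniform law $\pi$ as an invariant measure; since we have just argued that the chain is irreducible and aperiodic on the finite set $S_{N-1}^{(n)}$, any invariant measure is automatically the unique equilibrium distribution. Because $\pi$ is constant on $S_{N-1}^{(n)}$, the global balance equation $\pi = \pi P$ collapses to the statement that the kernel \eqref{transition} is doubly stochastic, i.e. $\sum_{\mathbf n} P(\mathbf n,\mathbf m) = 1$ for every $\mathbf m \in S_{N-1}^{(n)}$ (the row sums being already equal to $1$). I would in fact aim for the stronger and cleaner property of reversibility: since $\pi$ is constant, detailed balance $\pi(\mathbf n) P(\mathbf n,\mathbf m) = \pi(\mathbf m) P(\mathbf m,\mathbf n)$ reduces to the symmetry $P(\mathbf n,\mathbf m) = P(\mathbf m,\mathbf n)$, which yields double stochasticity, and hence invariance of $\pi$, for free.

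Next I would reduce everything to a two-site computation. A single step alters only the two chosen coordinates $i,j$ and leaves $s := n_i + n_j$ unchanged, so $P(\mathbf n,\mathbf m)=0$ unless $\mathbf n$ and $\mathbf m$ agree outside some pair $\{i,j\}$ and carry the same mass $s$ on that pair; it therefore suffices to understand the two-site update, averaged over the two admissible orderings $(i,j)$ and $(j,i)$. The mechanism is most transparent when the chain is started from $\pi$: conditionally on the frozen coordinates $(n_k)_{k\ne i,j}$ and on $s$, a uniform point of $S_{N-1}^{(n)}$ has $(n_i,n_j)$ distributed uniformly over the $s+1$ compositions $(a,\,s-a)$, $0\le a\le s$. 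Coagulation erases these two values and fragmentation resamples them from a law that depends on the pair only through $s$. Consequently the uniform law is preserved by one step precisely when the symmetrized fragmentation law is itself uniform over those $s+1$ compositions; this is the identity I would verify, and it immediately delivers $P(\mathbf n,\mathbf m)=P(\mathbf m,\mathbf n)$.

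The delicate point, and the step I expect to be the main obstacle, is the bookkeeping of the boundary indicator $1\!\!1\{n_j'\ge 1\}$ together with the diagonal $s=0$ term. Under a single ordering the first–selected agent may be assigned $0$ while the second may not, so the two ordered moves are individually asymmetric in source and target; one must check that adding the $(i,j)$ and $(j,i)$ contributions restores the exchange symmetry, giving the boundary splits (one coordinate equal to $0$) exactly the weight needed to match the interior ones. The interior case $n_i',n_j'\ge 1$ is immediate, so the entire argument rests on correctly accounting for the vanishing-coordinate and $s=0$ configurations. Once the symmetrized two-site kernel is shown to be symmetric, reversibility propagates to the full chain on $S_{N-1}^{(n)}$, the uniform law is invariant, and uniqueness from irreducibility identifies it with the equilibrium distribution $\pi$.
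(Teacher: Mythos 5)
Your high-level reductions are all sound, and your route is genuinely different from the paper's: the paper fixes the target state $\mathbf n'$, rewrites the kernel \eqref{transition} in terms of the preimage sets $A_{i,j}(\mathbf n')$ and their cardinalities, and checks directly that every column of the transition matrix sums to one, whereas you aim for the stronger property of reversibility, $P(\mathbf n,\mathbf m)=P(\mathbf m,\mathbf n)$, via a two-site computation. The gap is that the identity on which your argument rests is false for the dynamics as written. Fix a pair $\{i,j\}$ carrying mass $s=n_i+n_j\ge 1$ and average over the two orderings: a split $(a,s-a)$ with $1\le a\le s-1$ is produced by either ordering, hence receives probability $\frac12\cdot\frac1s+\frac12\cdot\frac1s=\frac1s$, while the extreme splits $(0,s)$ and $(s,0)$ are each produced by exactly one ordering, hence receive only $\frac1{2s}$. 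So the symmetrized fragmentation law is \emph{not} uniform on the $s+1$ splits (uniformity would require $\frac1{s+1}$ for every split), and the symmetry you want fails outright: conditionally on the pair $\{i,j\}$ being chosen with $s=2$, the move $(1,1)\to(2,0)$ has probability $\frac14$, while $(2,0)\to(1,1)$ has probability $\frac12$. Averaging the two orderings symmetrizes the interior splits but not the boundary ones, which is exactly the case you flagged as the crux; your guess about how it resolves is wrong, and the proof cannot be completed as proposed.

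You should also be aware that this is not a defect of your method alone: it exposes a genuine problem at the corresponding step of the paper's own argument. For an ordered pair $(i,j)$ and a target $\mathbf n'$ with $s=n'_i+n'_j\ge1$ and $n'_j\ge 1$, \emph{every} source $\mathbf n$ that agrees with $\mathbf n'$ off $\{i,j\}$ and has $n_i+n_j=s$ reaches $\mathbf n'$ with per-pair probability $\frac1s$, and there are $s+1$ such sources, not $(n'_i+n'_j)\vee 1=s$ as claimed in the paper's cardinality count; consequently the column sums of \eqref{transition} are not all equal to one. Concretely, for $N=2$, $n=2$, every row of the transition matrix on $\{(2,0),(1,1),(0,2)\}$ equals $(\frac14,\frac12,\frac14)$, so the unique invariant law is $(\frac14,\frac12,\frac14)$, not the uniform one. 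Both the paper's counting argument and your reversibility argument become correct under the symmetric fragmentation rule in which $n'_i$ is uniform on $\{0,1,\dots,n_i+n_j\}$, so that either agent (not only the first-selected one) may end up with zero: then each ordering already produces the uniform law on all $s+1$ splits, your two-site identity holds, $P(\mathbf n,\mathbf m)=P(\mathbf m,\mathbf n)$ follows, and double stochasticity together with irreducibility gives invariance and uniqueness exactly as you outline.
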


\begin{proof} 

Define 
\[ 
A_{i,j}(\mathbf n')= \left\{ \mathbf n : \mathbf n \stackrel{\text{coag-frag}_{i,j}}{\longrightarrow} \mathbf n'\right\}
\]
to be the set of all simplex elements $\mathbf n$ that map to $\mathbf n'$ via a coagulation-fragmentation procedure in the $i, j$ indices ($i$ selected before $j$). This set is never empty as it can always contain the vector $\bf n'$.
For fixed $(i,j)$ its cardinality is
\be 
\text{card}(A_{i,j}(\mathbf n')) = n'_i+n_j' +1= n_i+n_j +1.
\ee
Using this notation, we may re-write the transition probability in \eqref{transition} as 
\begin{align}
\mathbb{P}&\{\mathbf{X}(t+1)= \mathbf{n}'|\mathbf{X}(t)=\mathbf{n}\} \notag\\
&=\sum_{i,j : i \neq j}\left\{  \frac{1}{N} \frac{1}{N-1}\Bigg(\frac{1}{n'_i+n'_j +1}\Bigg)
\delta_{n_i+n_j,n'_i+n'_j} \prod_{k \neq i,j} \delta_{n'_k,n_k} \right\}\notag\\
&=\sum_{i,j : i \neq j}\left\{  \frac{1}{N} \frac{1}{N-1}\Bigg(\frac{1}{\text{card}(A_{i,j}(\mathbf n')) }\Bigg)
\delta_{n_i+n_j,n'_i+n'_j} \prod_{k \neq i,j} \delta_{n'_k,n_k} 1\!\!1\left\{\mathbf n \in A_{i,j}(\mathbf n')\right\}\right\}\notag\\
&\phantom{xxxxxxxxxxxxxxxnxnxnxnxxx} \text{since the $\delta$ product is equivalent to the last indicator,}\notag\\
&=\sum_{i,j : i \neq j}\left\{  \frac{1}{N} \frac{1}{N-1}\Bigg(\frac{1}{\text{card}(A_{i,j}(\mathbf n')) }\Bigg)1\!\!1\left\{\mathbf n \in A_{i,j}(\mathbf n')\right\} \right\}\label{eq:tr2}
\end{align}

Now fix a $\mathbf n'$ and add up all the transition probabilities in \eqref{eq:tr2} over 
$\mathbf n$. We get 
\begin{align*}
\sum_{\mathbf n}\sum_{i,j : i \neq j} &\left\{  \frac{1}{N} \frac{1}{N-1} 
\Bigg(\frac{1}{\text{card}(A_{i,j}(\mathbf n'))}\Bigg)1\!\!1\{\mathbf n \in A_{i,j}(\mathbf n')\} \right\}\notag\\
&=\sum_{i,j : i \neq j} \left\{  \frac{1}{N} \frac{1}{N-1} \Bigg(\frac{1}{\text{card}(A_{i,j}(\mathbf n'))}\Bigg)\sum_{\mathbf n}1\!\!1\{\mathbf n \in A_{i,j}(\mathbf n')\} \right\}\notag\\
&=\sum_{i,j : i \neq j} \left\{  \frac{1}{N} \frac{1}{N-1} \Bigg(\frac{1}{\text{card}(A_{i,j}(\mathbf n'))}\Bigg) \text{card}(A_{i,j}(\mathbf n'))\right\}\notag\\
&=1.\notag
\end{align*}
Therefore the transition matrix is doubly stochastic and in particular the invariant distribution must be uniform.
\end{proof}

\subsection{Convergence of the finite Markov chain as the overall wealth increases}

Reaching a similar conclusion in the case of the DC model is slightly more complicated. The difficulty is related to the fact that time is changing in discrete steps and the chain cannot explore the whole available state space because real numbers cannot be put in 1-to-1 correspondence with integers. How can we be sure that the Markov chain with continuous state space can explore its state space uniformly? We begin our analysis by studying the convergence of the finite state-space Markov chain to the continuous state-space Markov chain.

Let $\mathbf X^{(n)}$ be the DD Markov chain for wealth, when the wealth of the system is  and $n$ and let $\mathbf X^{(\infty)}$ be the chain for the DC model introduced in Section \ref{sec:DC1}. We scale the state space of each process $\mathbf X^{(n)}$ so that it is a subset of $\Delta_{N-1}$ by defining a new, coupled process 
\[\mathbf Y^{(n)} = n^{-1} \mathbf X^{(n)}. \]
The state space for the process $\mathbf Y^{(n)}$ is the simplex 
\[
\Delta_{N-1}(n)= \{ ( q_1, \ldots, q_d): 0 \le q_i \le 1,\, q_1 + \ldots + q_d = 1, \, n q_i \in \N_0 \} \subset \Delta_{N-1}.
\]
It can be considered as partition of $\Delta_{N-1}$ with mesh $n^{-1}$, i.e. inversely proportional of the total wealth. 

In this section we first prove weak convergence of the one-dimensional marginals 
\[ \mathbf Y^{(n)}_k \stackrel{n\to \infty}{\Longrightarrow} \mathbf X^{(\infty)}_k, \text{for all } k \in \N \] 
and then prove the existence of a unique invariant distribution for $X^{(\infty)}$  (the DC model) that we identify as the uniform distribution on $\Delta_{N-1}$.

Let $\mu_0^{(n)}$ the initial distribution of 
$Y_0^{(n)}$ and $\mu_0^{(\infty)}$ the initial distribution of  $X^{\infty}_0$.

\begin{proposition} Assume the weak convergence $\mu_0^{(n)} \Longrightarrow \mu^{(\infty)}_0$ as $n \to \infty$. Then for each $k \in \N$ we have weak convergence of the one-dimensional 
marginals 
\[ Y_k^{(n)} \Longrightarrow X_k^{(\infty)}\, \text{ as } n \to \infty.\]
\end{proposition}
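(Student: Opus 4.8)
The plan is to prove the stronger statement that the full state vectors converge almost surely under a suitable coupling, and then read off the one-dimensional marginals. Both chains are driven at each step by the same kind of randomness---a uniformly chosen ordered pair of indices followed by a uniform fragmentation variable---so I would build them on a common probability space from a single source of randomness. Concretely, let $(I_t,J_t)_{t\ge 1}$ be i.i.d.\ uniform on the ordered pairs $\{(i,j):1\le i,j\le N,\ i\neq j\}$ and let $(U_t)_{t\ge 1}$ be i.i.d.\ $U[0,1]$, all independent. I evolve the DC chain $\mathbf X^{(\infty)}$ from $X_0^{(\infty)}$ by coagulating coordinates $I_t,J_t$ and fragmenting with $U_t$, and I evolve the scaled DD chain $\mathbf Y^{(n)}$ from $Y_0^{(n)}$ using the \emph{same} $(I_t,J_t,U_t)$, realising the integer fragmentation through the floor function: writing $m=n Y^{(n)}_{I_t}(t)+n Y^{(n)}_{J_t}(t)\in\N_0$, set $n'_{I_t}=\fl{U_t\,m}$ and $n'_{J_t}=m-n'_{I_t}$. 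Since $\fl{U_t m}$ is uniform on $\{0,\dots,m-1\}$ when $m\ge1$, this reproduces exactly the DD fragmentation law (and makes the asymmetry noted in the Remark transparent: $n'_{I_t}$ may vanish while $n'_{J_t}\ge1$). Because only $\mu_0^{(n)}\Rightarrow\mu^{(\infty)}_0$ is assumed, I invoke the Skorokhod representation theorem on the compact metric space $\Delta_{N-1}$ to realise $Y_0^{(n)}$ and $X_0^{(\infty)}$ on the same space with $Y_0^{(n)}\to X_0^{(\infty)}$ almost surely.

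The heart of the argument is a one-step error estimate for $e(t):=\norm{\mathbf Y^{(n)}(t)-\mathbf X^{(\infty)}(t)}$ in the $\ell^1$ norm. Only the two coordinates $I_t,J_t$ are modified at step $t$, and the coagulated masses $s^y=Y^{(n)}_{I_t}(t)+Y^{(n)}_{J_t}(t)$ and $s^x=X^{(\infty)}_{I_t}(t)+X^{(\infty)}_{J_t}(t)$ satisfy $\abs{s^y-s^x}\le e(t)$. The only genuinely new error comes from the floor: since $\fl{U_t n s^y}/n$ differs from $U_t s^y$ by at most $1/n$, I obtain $\abs{Y^{(n)}_{I_t}(t+1)-X^{(\infty)}_{I_t}(t+1)}\le U_t\abs{s^y-s^x}+1/n$ and, using $n'_{J_t}=m-n'_{I_t}$, the same bound with $(1-U_t)$ for the $J_t$ coordinate. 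Adding over the (at most) two affected coordinates together with the unchanged ones yields a recursion of the form
\[
e(t+1)\ \le\ 2\,e(t)+\frac{2}{n},
\]
which I would verify also covers the degenerate configuration $m=0$. Iterating gives the explicit estimate $e(k)\le 2^k e(0)+\tfrac{2}{n}(2^k-1)$ for every fixed $k$.

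To conclude, fix $k$ and let $n\to\infty$. By the Skorokhod coupling $e(0)=\norm{Y_0^{(n)}-X_0^{(\infty)}}\to0$ almost surely, while $\tfrac{2}{n}(2^k-1)\to0$ deterministically; hence $e(k)\to0$ almost surely, i.e.\ $\mathbf Y^{(n)}(k)\to\mathbf X^{(\infty)}(k)$ almost surely. Almost sure convergence of the vectors forces almost sure convergence of every coordinate, and since the coupling preserves the marginal laws, each one-dimensional marginal of $\mathbf Y^{(n)}(k)$ converges weakly to the corresponding marginal of $\mathbf X^{(\infty)}(k)$, which is the claim.

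I expect the main obstacle to be bookkeeping rather than conceptual: one must check that the floor coupling reproduces the DD fragmentation law \emph{exactly} (including the boundary convention $n'_j\ge1$ and the $\vee$ in the definition), and that the one-step estimate stays valid in the degenerate configurations where a coagulated mass is zero in one chain but not the other. The exponential factor $2^k$ is harmless because $k$ is fixed, but it is worth flagging that the estimate deteriorates as $k\to\infty$, so it cannot by itself be pushed to the stationary regime; identifying the invariant distribution of the DC chain will require a separate argument.
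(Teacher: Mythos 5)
Your proof is correct, but it takes a genuinely different route from the paper's. The paper argues entirely at the level of distributions: for a bounded continuous test function $f$ it introduces the averaged function $F_{i,j}(x)=\E^{U}\bigl(f(x_1,\ldots,U(x_i+x_j),\ldots,(1-U)(x_i+x_j),\ldots,x_N)\bigr)$, uses uniform continuity on the compact simplex to show that the discrete fragmentation sum is a Riemann-sum approximation of $F_{i,j}$ up to $O(\e)$, rewrites $\E f(Y_1^{(n)})=\tfrac{1}{N(N-1)}\sum_{i\neq j}\E^{\mu_0^{(n)}}(F_{i,j})+O(\e)$, passes to the limit via $\mu_0^{(n)}\Rightarrow\mu_0^{(\infty)}$, and finishes by induction on $k$. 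You instead build a pathwise coupling: common index pairs $(I_t,J_t)$ and uniforms $U_t$, the DD fragmentation realised as $\fl{U_t m}$ (which is indeed uniform on $\{0,\ldots,m-1\}$ almost surely when $m\ge 1$, reproducing exactly the paper's convention that $n'_j\ge 1$, and giving $0$ when $m=0$), plus Skorokhod representation for the initial data; your one-step bounds also survive the degenerate configurations, where the discrepancy of the two active coordinates is exactly $\abs{s^y-s^x}$. Two remarks. First, your estimates prove more than you state: adding the $U_t\abs{s^y-s^x}+1/n$ bound for coordinate $I_t$ to the $(1-U_t)\abs{s^y-s^x}+1/n$ bound for coordinate $J_t$ gives a total contribution $\abs{s^y-s^x}+2/n$, so the recursion is actually linear, $e(t+1)\le e(t)+2/n$, hence $e(k)\le e(0)+2k/n$; the exponential factor $2^k$ is an artifact of discarding the cancellation between $U_t$ and $1-U_t$, not a feature of the coupling. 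Second, on what each approach buys: yours is more elementary (triangle inequalities replace the uniform-continuity and Riemann-sum estimates), it yields joint weak convergence of the full state vector at each fixed time with an explicit $O(k/n)$ rate, and it makes the asymmetry flagged in the paper's Remark transparent; the paper's operator-style argument avoids constructing any coupling, works directly with the transition probabilities, and is the formulation that documents the Monte Carlo approximation property advertised in the introduction, so it adapts more readily to dynamics for which no monotone realisation of the fragmentation variable is available.
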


\begin{proof} 

	We first show this for $k= 1$ and then show it in general with an inductive argument. 
Let $f$ be a bounded continuous function on $\Delta_{N-1}$. Let $U$ be a uniform random variable on $[0,1]$
and define the bounded and continuous
$F_{i,j}: \Delta_{N-1} \to \R$ by 
	\[
		F_{i,j}(x_1, \ldots, x_d) = \E^{U}(f (x_1, \ldots, U(x_i + x_j), x_{i+1},  \ldots , (1 -U)(x_i + x_j),x_{j+1}, \ldots, x_d)).
	\]
Pick an $\e>0$. By compactness, we can find a $\delta = \delta(\e)>0$ such that whenever $\|x - y\|_1 < \delta$ we have that 
\[ \sup_{\{i,j\}}| F_{ij}(x) - F_{ij}(y)| + |f(x) - f(y)| < \e.\]
From this relation, choose $n$ large enough so that the discrete simplex $\Delta_{N-1}(n)$ is fine enough, namely two neighboring points $x^{(n)}, y^{(n)}$ satisfy $\|x^{(n)} - y^{(n)}\|_1 < \delta$. In particular, this implies that $n> 2\delta^{-1}$.

The function $F_{i,j}$ evaluated on the partition points is 
\begin{align*}
	F_{i,j}(x^{(n)})& = \int_0^1f(x_1^{(n)}, \ldots, u(x_i^{(n)} + x_j^{(n)}), \ldots, (1-u)(x_i^{(n)} + x_j^{(n)}), \ldots, x_d)\,du\\
	&=\begin{cases} 
	f(x^{(n)}), \quad\quad \quad x_i^{(n)} + x_j^{(n)} = 0\\
	\frac{1}{x_i^{(n)} + x_j^{(n)}}\displaystyle \int_0^{x_i^{(n)} + x_j^{(n)} }f(x_1^{(n)}, \ldots, s, \ldots, x_i^{(n)} + x_j^{(n)} -s, \ldots, x_d)\,ds, \quad \text{otherwise.}
	\end{cases}
\end{align*}
	Focus on the integral of the second branch for a moment. We discretise the integral on $\Delta_{N-1}(n)$ with $s$-values  $0, 1/n, \ldots, x_i^{(n)} + x_j^{(n)}$. Then 
	\begin{align*}
	\Big|\int_{k/n}^{(k+1)/n} &f(x_1^{(n)}, \ldots, s, \ldots, x_i^{(n)} + x_j^{(n)} -s, \ldots, x_d)\,ds \\
	&\phantom{xxxxxxxxxxx}- n^{-1}f(x_1^{(n)}, \ldots, k/n, \ldots, x_i^{(n)} + x_j^{(n)} - k/n, \ldots, x_d)\Big| < \e/n.
	\end{align*}
Therefore, the overall error, 
\begin{equation}
\left|F_{i,j}(x^{(n)}) - \sum_{k=0}^{n(x_i^{(n)} + x_j^{(n)})} \frac{f(x_1^{(n)}, \ldots, k/n, \ldots, x_i^{(n)} + x_j^{(n)} - k/n, \ldots, x_d)}{n(x_i^{(n)} + x_j^{(n)})}\right| < \e\Big(1+\frac{1}{n}\Big).
\end{equation} 

Now we turn to prove the weak convergence:
\allowdisplaybreaks
\begin{align*}
		\E(f(Y^{(n)}_1)) &= \sum_{x \in \Delta_{N-1}(n)} f(x)\P\{ Y^{(n)}_1 = x\}\\
		&= \sum_{x \in \Delta_{N-1}(n)} \sum_{y \in \Delta_{N-1}(n)}f(x)\P\{ Y^{(n)}_1 = x | Y^{(n)}_0 =y\}\mu_0^{(n)}(y)\\
		&= \sum_{y \in \Delta_{N-1}(n)}\mu_0^{(n)}(y) \sum_{x \in \Delta_{N-1}(n)}f(x)\P\{ X^{(n)}_1 = x | X^{(n)}_0 =y\}\\
		&=  \sum_{y \in \Delta_{N-1}(n)}\mu_0^{(n)}(y) \sum_{i,j:, i\neq j }\quad\sum_{\substack{x_i + x_j = y_i + y_j\\ x_k = y_k}} f(x) \P\{ Y^{(n)}_1 = x | Y^{(n)}_0 =y\}\\
		&=\frac{1}{N(N-1)}  \sum_{y \in \Delta_{N-1}(n)}\mu_0^{(n)}(y) \times\\
		&\sum_{i,j: i\neq j }\quad\sum_{\substack{x_i + x_j = y_i + y_j\\ x_k = y_k}} f(x_1, \ldots x_i, \ldots, x_j, \ldots, x_d) \Big(\frac{1}{n(y_i+y_j)+1} \Big)\\
		&=\frac{1}{N(N-1)}  \sum_{y \in \Delta_{N-1}(n)}\mu_0^{(n)}(y) \\
		&\phantom{xxx}\times  \frac{n(y_i + y_j)}{n(y_i + y_j)+1}\sum_{i,j: i\neq j }\Bigg\{\quad\sum_{k= 0}^{n(y_i + y_j)} f(y_1, \ldots, n^{-1}k, \ldots, y_i+y_j - n^{-1}k, \ldots, y_d) \frac{1}{n(y_i + y_j)}\Bigg\}\\
		&=\frac{1}{N(N-1)}  \sum_{y \in \Delta_{N-1}(n)}\mu_0^{(n)}(y) \times \sum_{i,j: i\neq j } F_{i,j}(y) + O(\e) + O(n^{-1})		\\
		&=\frac{1}{N(N-1)}  \sum_{y \in \Delta_{N-1}(n)}\mu_0^{(n)}(y)\sum_{i,j: i\neq j } F_{i,j}(y)+ O(\e) +  O(n^{-1})\\
		&= \frac{1}{N(N-1)} \sum_{i,j: i\neq j }  \E^{\mu^{(n)}_0}(F_{i,j}) + O(\e) + O(n^{-1}).
\end{align*}
Now let $n \to \infty$ and recall that $F_{i,j}$ is bounded and continuous to conclude that 
\[
- C\e \le  \varliminf_{n\to \infty} \E(f(Y^{(n)}_1)) -   \frac{1}{N(N-1)} \sum_{i,j: i\neq j }  \E^{\mu^{(\infty)}_0}(F_{i,j}) \le \varlimsup_{n\to \infty} \E(f(Y^{(n)}_1)) -   \frac{1}{N(N-1)} \sum_{i,j: i\neq j }  \E^{\mu^{(\infty)}_0}(F_{i,j}) \le C\e.
\]
where $C$ is a constant independent of $n$ that comes from the error term.
Let $\e  \to 0$ to conclude the limit exists and observe that the definition of $F_{i,j}$ and the disintegration theorem imply that 
\[
\lim_{n\to \infty} \E(f(Y^{(n)}_1)) =   \frac{1}{N(N-1)} \sum_{i,j: i\neq j }  \E^{\mu^{(\infty)}_0}(F_{i,j}) = \E(f(X^{(\infty)}_1)).
\]

Therefore we have now shown that the $\mu_1^{(n)} \Longrightarrow \mu_1^{(\infty)}$ if $\mu_0^{(n)} \Longrightarrow \mu_0^{(\infty)}$. An inductive construction and the Markov property are enough to guarantee that all one-dimensional marginals converge.
\end{proof}

\subsection{Irreducibility, uniqueness of the invariant measure and stability}

We can now proceed to study of irreducibility, of the uniqueness of the invariant measure and of the stability for the continuous-space Markov chain.

We begin with a proposition that will simplify the mathematical technicalities associated with general state space discrete time Markov chains.
\begin{proposition}[Duality of coagulation and fragmentation]
\label{prop:invdi}
Let $\mathbf{X} (t)$ denote the coagulation-fragmentation Markov chain defined in Section \ref{sec:DC1}. If $\mathbf{X}(t) \sim U[\Delta_{N-1}]$ then
$\mathbf{X}(t+1) \sim U[\Delta_{N-1}]$, as well.
\end{proposition}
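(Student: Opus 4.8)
The plan is to exploit the special structure of the uniform law on $\Delta_{N-1}$, which is precisely the symmetric Dirichlet distribution $\mathrm{Dir}_{N-1}(1,\ldots,1)$ of \eqref{dirichlet}, together with its aggregation and conditioning properties. Since one full step of the chain first selects an ordered pair $(i,j)$ uniformly among the $N(N-1)$ possibilities and then performs a coagulation-fragmentation move on those two coordinates, and since a uniform mixture of copies of a single measure is again that measure, it suffices to fix one pair $(i,j)$ and prove that the single-pair update preserves $U[\Delta_{N-1}]$.

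First I would record the key \emph{duality} fact that gives the proposition its name. If $\mathbf{X} \sim \mathrm{Dir}_{N-1}(1,\ldots,1)$, then by the aggregation property of the Dirichlet distribution the vector obtained by replacing the two coordinates $X_i,X_j$ with their sum $S = X_i + X_j$ (keeping the others) is distributed as $\mathrm{Dir}_{N-2}(2,1,\ldots,1)$ on $\Delta_{N-2}$; moreover, conditionally on $S$ and on the remaining coordinates $(X_k)_{k\neq i,j}$, the split ratio $X_i/S$ is distributed as $\mathrm{beta}(1,1)=U[0,1]$ and is independent of the conditioning variables. This is exactly the information that the coagulation step discards.

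The central step is then the observation that the fragmentation step re-draws this split ratio from a fresh $U\sim U[0,1]$, setting the new coordinates to $US$ and $(1-U)S$. Because the fresh $U$ has the same law as the conditional law of $X_i/S$ computed above and is independent of $(S,(X_k)_{k\neq i,j})$, replacing $X_i/S$ by $U$ leaves the joint law of $S$, the untouched coordinates, and the split unchanged. Reconstituting $(US,(1-U)S)$ in positions $i,j$ therefore returns a vector with the same distribution $\mathrm{Dir}_{N-1}(1,\ldots,1)=U[\Delta_{N-1}]$. Averaging over the uniform choice of the pair $(i,j)$ preserves this, which gives $\mathbf{X}(t+1)\sim U[\Delta_{N-1}]$.

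The main obstacle is the careful handling of the conditional/disintegration statement in the continuous setting: one must justify that under $U[\Delta_{N-1}]$ the split ratio $X_i/S$ is genuinely $U[0,1]$ and independent of the aggregated coordinates, and that the degenerate event $S=0$ carries zero mass so that edge cases are irrelevant. If one prefers to avoid invoking Dirichlet facts, the same conclusion can be reached by a direct test-function computation: with $F_{i,j}$ as in the preceding proof, one shows $\int_{\Delta_{N-1}} F_{i,j}\,d\mathbf{x} = \int_{\Delta_{N-1}} f\,d\mathbf{x}$ for each pair by Fubini, using that the uniform measure restricted to a segment $\{x_i+x_j=s\}$ with the other coordinates fixed is uniform in the split variable, so that averaging over the split does not change the integral.
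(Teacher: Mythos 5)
Your proof is correct, but it is genuinely different from what the paper does: the paper's entire ``proof'' of Proposition~\ref{prop:invdi} is a citation to Bertoin's book (\cite{Bertoin}, Chapter~2, Corollary~2.1), i.e.\ it invokes the general theory of exchangeable coagulation--fragmentation dualities rather than arguing directly. Your argument is self-contained and elementary: you identify $U[\Delta_{N-1}]$ with $\mathrm{Dir}_{N-1}(1,\ldots,1)$, use the aggregation property to see that coagulation of the pair $(i,j)$ produces a $\mathrm{Dir}_{N-2}(2,1,\ldots,1)$ vector, and use the conditioning property (most transparently via the gamma representation behind Proposition~\ref{uniformdirichlet}: $X_i/(X_i+X_j)=W_i/(W_i+W_j)$ is $\mathrm{beta}(1,1)$ and independent of $\bigl(W_i+W_j,(W_k)_{k\neq i,j}\bigr)$) to see that the split ratio discarded by coagulation has exactly the law $U[0,1]$, independent of everything retained. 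Since fragmentation redraws precisely this ratio from an independent $U[0,1]$, the one-pair update preserves the law, and mixing uniformly over ordered pairs changes nothing; the null event $X_i+X_j=0$ is correctly dismissed. What the paper's route buys is brevity and a pointer to a much more general framework (invariance statements for broader classes of coagulation--fragmentation dynamics); what your route buys is a verifiable argument inside the paper itself, one that reuses machinery the paper has already set up (the Dirichlet density \eqref{dirichlet} and the gamma construction) and that makes the word ``duality'' in the proposition's title concrete: coagulation forgets exactly the randomness that fragmentation replenishes, with matching distribution. Your fallback test-function computation via Fubini is also sound and is essentially the disintegration that underlies the weak-convergence proof preceding this proposition, so either version would integrate cleanly into the paper.
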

\begin{proof}
See \cite{Bertoin} chapter 2, corollary 2.1, page 77.
\end{proof}
This proposition means that the uniform distribution on the simplex $\Delta_{N-1}$ is an invariant distribution for the coagulation-fragmentation chain. 

What we prove in the sequence is that this is the unique invariant measure and the transition kernels converge to it in the total variation norm.
With this goal in mind we begin with some definitions. 

\begin{definition}[Phi-irreducibility]
Let $(S, \mathcal{B}(S), \phi)$ be a measured Polish space. A discrete time Markov chain $\mathbf{X}$ on $S$ is $\phi$-irreducible 
if and only if for any Borel set $A$ the following implication holds: 
\[ \phi(A) > 0 \Longrightarrow L( u, A) > 0, \quad \text{ for all } u \in S.\]
Above we used notation
\[
L(u, A) = P_u\{ X_n \in A  \text{ for some } n\} = P\{X_n \in A\, \text{for some } n |\,X_0 =u \}. 
\]
\end{definition}
This replaces the notion of irreducibility for discrete Markov chains and mens that the chain is visiting any set of positive measure with positive probability.

The existence of a Foster-Lyapunov function V defined as
\begin{definition}[Foster-Lyapunov function]
For a \textit{petite} set $C$ we can find a function $V \ge 0$ and a $\rho >0$ 
so that for all $x \in S$
\be\label{eq:lya} 
	\int P(x, dy) V(y) \le V(x) - 1 + \rho 1\!\!1_C(x),
\ee
\end{definition}
implies convergence of the kernel $P$ of $\phi$-irreducible, aperiodic chain to a unique equilibrium measure $\pi$
\be 
	\sup_{A \in \mathcal B(S)} \left| P^n(x, A) - \pi(A)\right| \to 0, \text{ as } n\to \infty.
\ee
 (see \cite{Meyn1993}) for all $x$ for which $V(x) < \infty$.  If we define $\tau_C$ to be the number of steps it takes the chain to return to the set $C$, the existence of a Foster-Lyapunov function 
(and therefore convergence to a unique equilibrium)
 is equivalent to $\tau_C$ having finite expectation, i.e.
\[
\sup_{x \in C} \mathbf E_x({\tau_C}) < M_C
\]
which is in turn is implied when $\tau_C$ has geometric tails. This is in fact what we prove is the sequence. 

In our case, $\phi$ will be the Lebesgue measure and the role of the petite set $C$ will be played 
by any set with positive Lebesgue measure. This useful simplification of the mathematical technicalities is an artefact of the 
compact state space ($\Delta_{N-1}$) and the fact that the uniform distribution on the simplex is invariant for the chain (Proposition \ref{prop:invdi}).

\begin{proposition}\label{prop:leb:irr}
	Let $t \in \mathbb N$. The discrete chain $\mathbf{X} = \{X_n\}_{n\in \mathbb N}$ as defined in Section 2.1 is $\phi$-irreducible, where $\phi \equiv \lambda_{N-1}$ is the Lebesgue measure on the simplex.
\end{proposition}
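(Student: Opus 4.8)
The plan is to show that, from an arbitrary starting point and after a fixed finite number of steps, the transition kernel acquires a component absolutely continuous with respect to $\lambda_{N-1}$ whose density is strictly positive on a neighbourhood of an arbitrary interior point of the simplex; $\phi$-irreducibility is then immediate. The essential difficulty to overcome is that a single coag-frag step on the ordered pair $(i,j)$ only redistributes mass between coordinates $i$ and $j$ while preserving their sum $x_i+x_j$ and leaving all other coordinates fixed. Thus one step maps the state into a one-dimensional slice, a $\lambda_{N-1}$-null set, and cannot charge a positive-measure target. The whole point is to compose enough steps with different index pairs so that the accumulated uniform randomness spreads the state over a full-dimensional region.

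First I would fix a convenient schedule of index pairs. After relabelling so that $u_1>0$ (possible since $\sum_i u_i=1$), let the chain follow the star schedule $(1,2),(1,3),\dots,(1,N)$ over $N-1$ consecutive steps, which happens with probability $(N(N-1))^{-(N-1)}>0$. Writing $U_2,\dots,U_N$ for the successive uniforms, the move on $(1,k)$ freezes the $k$-th coordinate at $x_k=(1-U_k)C_k$, where $C_k=U_{k-1}C_{k-1}+u_k$ and $C_2=u_1+u_2$, while coordinate $1$ carries the running remainder $U_kC_k$ and ends at $x_1=U_NC_N$. This defines a smooth map $\Phi_u\colon(0,1)^{N-1}\to\Delta_{N-1}$. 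It is triangular, since $x_k$ depends only on $U_2,\dots,U_k$ with $\partial x_k/\partial U_k=-C_k$; on the interior all $C_k>0$, so the Jacobian determinant equals $\prod_{k=2}^{N}(-C_k)\neq0$ and $\Phi_u$ is an injection with smooth inverse. Hence the law of $\Phi_u(U_2,\dots,U_N)$ under independent uniforms is absolutely continuous with respect to $\lambda_{N-1}$, with density bounded below by a positive constant on the open, full-dimensional image $R(u)=\{x\in\Delta_{N-1}^{\circ}:\ \sum_{l=2}^{k}x_l<\sum_{l=1}^{k}u_l \text{ for } k=2,\dots,N-1\}$, where $\Delta_{N-1}^{\circ}$ is the relative interior.

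Next I would gather and then scatter. For any $u$ the region $R(u)$ contains states with almost all mass on coordinate $1$, so from any $u$ the chain reaches, in $N-1$ steps and with positive probability, an absolutely continuous distribution concentrated near the vertex $e_1$. Applying the same computation from a point $v$ with $v_1>1-\eta$, the constraints defining $R(v)$ read $\sum_{l=2}^{k}x_l<v_1+\cdots+v_k$, whose right-hand sides all exceed $1-\eta$; consequently, for any prescribed interior target $x^\star$ one has $x^\star\in R(v)$ once $\eta$ is small enough (it suffices that $\eta(N-2)/(N-1)<x_1^\star+x_N^\star$, which is positive for interior $x^\star$). Composing the two phases and integrating over the positive-measure set of admissible intermediate states $v$ shows that $P^{2(N-1)}(u,\cdot)$ has an absolutely continuous part with density positive on a neighbourhood of $x^\star$, for every interior $x^\star$ and every starting point $u$.

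Finally, given any Borel $A$ with $\lambda_{N-1}(A)>0$, I would choose an interior Lebesgue-density point $x^\star$ of $A$ (the boundary of the simplex is $\lambda_{N-1}$-null) and a neighbourhood $G$ on which the above density is positive; then $\lambda_{N-1}(A\cap G)>0$ forces $P^{2(N-1)}(u,A)>0$, so $L(u,A)>0$ for all $u$, which is exactly $\phi$-irreducibility with $\phi=\lambda_{N-1}$. I expect the crux to be the middle two steps: verifying the non-vanishing of the triangular Jacobian, so that the pushforward is genuinely absolutely continuous, and the geometric bookkeeping showing that the scatter region $R(v)$ from a near-vertex state can be made to contain an arbitrary interior target.
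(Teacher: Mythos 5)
Your proof is correct, but it takes a genuinely different route from the paper's. The paper argues by induction on the dimension: it localises $A$ inside a small simplex $\delta\Delta_{N-1}(q_0)$ carrying positive measure of $A$, uses Fubini--Tonelli to extract a positive-$\lambda_1$-measure set $F$ of first-coordinate values whose slices through $A$ have measure bounded below by $1/n$, and then matches coordinates one at a time: a single coag-frag step places the first coordinate in $F$ with positive probability (because the fragmentation point is uniform on a line, hence has a density there), after which, on the positive-probability event that the first coordinate is never selected again, the remaining $(N-2)$-dimensional chain hits the slice by the induction hypothesis. You never slice $A$ at all: you fix the star schedule $(1,2),\dots,(1,N)$, push the $N-1$ independent uniforms through the resulting triangular map, check that the Jacobian $\prod_{k}(-C_k)$ is nonvanishing (and the triangular structure gives global invertibility, since the $U_k$ can be recovered recursively from $x$), and conclude that the $(N-1)$-step kernel has an absolutely continuous component with density bounded below on the explicit full-dimensional region $R(u)$; the gather-scatter composition through a neighbourhood of the vertex $e_1$ then covers any interior target, and a Lebesgue density point of $A$ finishes the argument. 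The trade-off: the paper's induction avoids any change-of-variables computation and handles the target set directly through its slices, whereas your argument is non-inductive and proves strictly more than irreducibility --- it gives a Doeblin-type minorization $P^{2(N-1)}(u,\cdot)\ge \epsilon\,\lambda_{N-1}(\cdot\cap G)$ with $\epsilon>0$ and the neighbourhood $G$ of the interior target uniform over all starting points $u$, which shows such sets are small and would largely subsume the paper's subsequent return-time estimate (Proposition \ref{prop:lyap:ex}). One cosmetic slip: your parenthetical sufficient condition $\eta(N-2)/(N-1)<x_1^\star+x_N^\star$ is not justified in the worst case (all of the residual mass of $v$ sitting on coordinate $N$ contributes nothing to the constraints with $k\le N-1$); the clean condition is $\eta\le x_1^\star+x_N^\star$, and since you only need ``$\eta$ small enough'', the argument is unaffected.
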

At this point it is useful to explain the idea of the proof of Proposition \ref{prop:leb:irr} when we have deterministic dynamics. We do this in the (easy to visualise) case $N =3$, while the proof is done generally, with Markov dynamics.
For any pair $u, v \in \Delta_2^\circ$, there is a deterministic way to move from $u=(x_u,y_u,z_u)$ to $v=(x_v,y_v,z_v)$ in precisely two steps. The same happens in higher dimensions; on $\Delta_{N-1}^\circ$ we can move from any starting point to any target point using deterministic coagulation-fragmentation dynamics in precisely $N-1$ steps.

	Since the dynamics is symmetric with respect to the coordinates,  we may assume without loss of generality, 
	that  $z_u \le 2/3$ and therefore there exists an entry in $v$, say $x_v$, such that 
	$ m_1 = \displaystyle\frac{x_v}{1-z_u} \le 1.$
	Furthermore, $m_2 = \displaystyle\frac{x_v}{1-y_v} \le 1.$ Then consider the mapping 
	\begin{align}
	u = (x_u, &y_u, z_u) \mapsto (m_1(x_u + y_u), (1-m_1)(x_u + y_u), z_u) \notag \\
	&\mapsto \left(m_1(x_u + y_u), m_2[(1-m_1)(x_u + y_u) + z_u], (1-m_2)[(1-m_1)(x_u + y_u) + z_u]\right) \notag \\
	&= (m_1(1-z_u), m_2[(1-m_1)(1-z_u) + z_u], (1-m_2)[(1-m_1)(1-z_u) + z_u]) \notag\\
	&=(x_v,y_v,z_v) =v. \label{eq:det:map}
	\end{align}
	This idea captures the proof of the Lebesgue - irreducibility (see also Fig. \ref{fig:Delta2}). 

	\begin{figure}[h]
	\begin{center}
		\begin{tikzpicture}[scale=1.3]
		\fill[blue!20](0,0)--(2, 2*1.71)--(3.5,0.5*1.71)--(3,0)--(0,0);
		\draw[dashed](0,0)--(2, 2*1.71)--(3.5,0.5*1.71)--(3,0)--(0,0);
			\draw(0,0)--(4,0)--(2, 2*1.71)--(0,0);
			\draw[nicosred, line width=1.5pt] (0.5,0.5*1.71)--(3.5,0.5*1.71);
			\draw[nicosred, line width=2pt] (1,0)--(2.5,1.5*1.71);
			\draw[ball color= nicosred] (3,0.5*1.71)circle(0.9mm)node[below right]{\small$u$};
			\draw[ball color= nicosred] (1.5,0.5*1.71)circle(1mm)node[below right]{\tiny Step 1};
			\draw[ball color= nicosred] (2,1.71)circle(1mm)node[below right]{\small$v$}; 
			
		\end{tikzpicture}
	\end{center}
	\caption{Schematic of a possible coagulation-fragmentation route from $u$ to $v$ in two steps.  
		Starting from point $u \in \Delta_2$, fix $z_u$. Then on the line $x + y = 1-z_u$, 
		pick the point $(x_v, 1-z_u -x_v, z_u)$. 
		From there, fix $x_v$ and choose $(y_v, z_v)$ on the line $1-x_v =y_v + z_v$. The shaded region are all points $v$ that can be 
		reached with this procedure from $u$, first by fixing $z_u$ and then by fixing $x_v$. 
		Points in the white region can be reached from $u$ first by fixing $z_u$ and then $y_v$.}	
		\label{fig:Delta2}
	\end{figure}
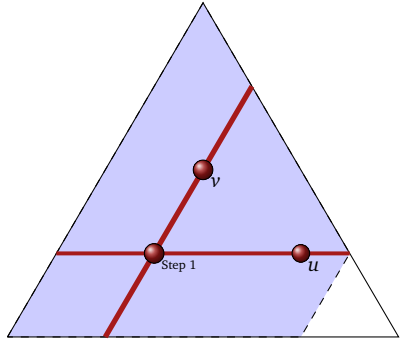

\begin{proof}[Proof of Proposition \ref{prop:leb:irr}] 
	
	First observe that excluding one coordinate (say $x_1$) from the coagulation process in $\Delta_{N-1}$, 
	we are merely restricting the dynamics to  $(1-x_1)\Delta_{N-2}$. This observation is what allows us 
	to proceed by way of induction.

	\textit{Base case $N = 3$.} We choose the base case $N=3$ for purposes of clarity, in a way that can be 
	immediately generalised to higher dimensions. 
	We are working on $(\Delta_2, \mathcal{B}(\Delta_2), \lambda_2 \equiv \lambda \otimes \lambda)$.
	
	 Let $A$ be a Borel set and assume $\lambda_2(A) = \alpha > 0$. 
	We will show that starting from any $x$, the probability of hitting $A$ 
	in just two steps with the coagulation-fragmentation dynamics described above,
	is strictly positive. 
	
	For any $\delta>0$ and point $u$, let $\delta \Delta_2(u)$ denote the scaled simplex 
	with length side $\delta\sqrt{2}$ and barycentre $u$. 
	
	Since $A$ has positive Lebesgue measure, for any $\varepsilon >0$ we can find an open set 
	$G_{A, \varepsilon} \supseteq A$ so that $\lambda_2(G_{A,\varepsilon} \setminus A) < \varepsilon$. Fix an $\varepsilon >0$ and construct $G_{A,\varepsilon}$. 
	Enumerate all rationals in $G_{A, \varepsilon}$ and find $\delta = \delta(A, \varepsilon)> 0$ so that 
	\[ 
	A \subseteq \bigcup_{q \in G_{A, \varepsilon}} ( \delta \Delta_2(q)), \quad \text{and} \quad \lambda_2
	\Big(  \bigcup_{q \in G_{A, \varepsilon}} ( \delta \Delta_2(q))\Big) \le \alpha +2\varepsilon.
	\]
	without loss of generality we may assume that $\delta \Delta_2(q)\cap A \neq \varnothing$ 
	for all $q$ in the union, otherwise we remove the extraneous simplexes from the union. 
	Since the union is countable, there must be a barycentre $q_0$ such that 
	\[ \beta:= \lambda_2(\delta \Delta_2(q_0)\cap A) > 0.\]
	
	Let $u$ be an arbitrary starting point of the process. Without loss of generality, 
	and by possibly decreasing our initial choice of $\delta$, 
	assume
	\begin{enumerate}
	 	\item  $z_u \le 2/3$,
	 	\item $u$ can be deterministically mapped at any point $v \in \delta \Delta_2(q_0)\cap A$ 
		by first fixing $z_u$ and then $x_v$, as in calculation \eqref{eq:det:map}. 
	\end{enumerate} 
	 We denote the three corners $\delta \Delta_2(q_0)$ 
	by $a = (x_a,y_a,z_a), b=(x_a-\delta, y_a+\delta, z_a), c =(x_a-\delta, y_a, z_a+\delta)$. Then,
\allowdisplaybreaks
	\begin{align*}
	0< \beta & = \int_{A \cap\delta \Delta_2(q_0)}\,d\lambda_2
	= \int \int_{A \cap\delta \Delta_2(q_0)} \, d\lambda_1\,d \lambda_1 \\
	& = \int d\lambda_1 \Big(1\!\!1\{ x_a-\delta  <x < x_a  \}  \int d\lambda_1 1\!\!1\{ A\cap\Delta_2(q_0) \cap \{ z + y =1-x  \}\}\Big)\\
	&= \int d\lambda_1 \Big(1\!\!1\{ x_a-\delta  <x < x_a  \}  \int d\lambda_1 1\!\!1\{ (x,y,z) \in A\cap\Delta_2(q_0) : y+z = 1-x\}\Big).
	\end{align*}
	Thus, for a positive $\lambda_1$ measure of $x \in (x_a - \delta, x_a)$ we can find positive $\lambda_1$ measure 
	of the intersection between the set $A$ and the line $z+y=1-x$. Thus, we restrict to the measurable set $F = \{x \in [x_a-\delta, x_a]: \gamma_x > n^{-1}\}$ where
	\[\gamma_x = \lambda_1 \{ A \cap\delta \Delta_2(q_0)\cap \{  y + z =1-x\}\}. \]
	Integer $n$ is chosen large enough so that $\lambda_1(F) > 0$.	
	We have established the existence of a set $C$ so that 
	\[
	C =  \{ (x,y,z):  x \in F, (x,y, z) \in A\cap\delta \Delta_2(q_0)\}, \quad \lambda_2(C)>0.
	\]
	
	This is enough to finally complete the proof of the base case.
	Recall the starting point $u = (x_u, y_u, z_u)$ of the Markov chain. Define the projection set 
	\[ F_{C,u} = \{ (x, y, z):  z = z_u, \exists (x_0, y_0, z_0) \in C  \text{ s.t. } y + z_u = y_0 + z_0 = 1- x_0, x = x_0 \}, \]
	which has positive measure as $\lambda_1(F) = \lambda_1(F_{C,u})$.

	With strictly positive probability, we select to coagulate the first and second coordinate. 
	Then with strictly positive probability, 
	we fragment into the set $F_{C, u}$. 
	This is because the coagulation-fragmentation process of two coordinates picks a uniformly 
	distributed point on the line $x + y = 1-z_u$ by virtue of construction. The uniform distribution is a scalar 
	multiple of the Lebesgue measure, thus guaranteeing that the probability of selecting such 
	a point is strictly positive. 
	Then, given the chain's current position, with strictly positive probability we coagulate the last two coordinates. 
	For the same reason as before, with strictly positive probability we terminate in the set 
	$C\subseteq A$ since for any  fixed $x \in F$, the fragmentation 
	has probability no less than $1/n$ to pick up a point $(x, y_0,z_0) \in C$. 
	
	To conclude, in just two steps we have a positive probability of hitting $A$ from 	any starting point $u$. 
	
	\textit{Induction case:} Now consider simplex $\delta_{N-1}$, when $N \ge 4$ and assume that the proposition
	 is true for all $k < N$. Let $A \subseteq \Delta_{N-1}$ be a Borel set of positive $\lambda_{N-1}$ measure. 
	 As for the base case, we can find a simplex $\delta \Delta_{N-1}(q_0)$ such that 
	 $\lambda_{N-1}(A\cap \delta \Delta_{N-1}(q_0))>0$ and with the same Fubini-Tonelli argument conclude that 
	 there exist a positive integer $n$ and a positive $\lambda_1$-measure set $F$ of $x$ values so that 
	 \[\lambda_{N-2}(A\cap\delta\Delta_{N-1}(q)\cap\{x=x_0\in F\}) > n^{-1}.\]
	 
	 Without loss of generality assume that from the starting point $u$ we can 
	 coagulate and fragment two coordinates, say $u_1$ and $u_2$ so that  
	 $\lambda_1\{x \in F,   x < u_1 + u_2\} > 0$. Then, for the Markov chain, this implies 
	 	\be \label{eq:p1}
		P_u\{ X_1 \cdot e_1 \in F\} > 0. 
		\ee
	 
	 Let 
	 \[ B = \{ X_2, X_3, \ldots, X_{N-1} \;  \text{does not coagulate the first coordinate}\}.\]
	 Again, 
	 \be \label{eq:p2}
		P_u\{ B | X_1 \cdot e_1 \in F\}  = P_u\{ B \} > 0. 
		\ee 
	 Then it is immediate to compute 
	 \begin{align*}
	 	L(u, A) &= P_u\{ X_\ell \in A  \text{ for some } \ell\} \ge P_u\{ X_{N-1} \in A \} \\
		&\ge P_u\{ X_{N-1} \in A, X_1 \cdot e_1 \in F, B\}\\
		&\ge P_u\{ X_{N-1} \in A | B, X_1 \cdot e_1 \in F \} P_u\{ B | X_1 \cdot e_1 \in F \} P_u\{ X_1 \cdot e_1 \in F\}
		> 0.
	 \end{align*}
	 Strict positivity of the last two factors follows from \eqref{eq:p1}, \eqref{eq:p2} while 
	 	$P_u\{ X_{N-1} \in A | B, X_1 \cdot e_1 \in F \}$ equals the probability that the $N-2$ 
		dimensional fragmentation-coagulation process starting from a random point $u_0$ with 
		$x_{u_0} \in F$ hits the set $A\cap \delta \Delta_{N-1}(q)\cap\{ x = x_0\}$ in $N-2$ steps. 
		By the induction hypothesis this probability is strictly positive (given the starting point). 
		By restricting the set $F$ so that its measure remains positive, we may further assume 
		that these probabilities are uniformly bounded away from $0$, independently of the starting point.
		This concludes the proof.
	 \end{proof}

\begin{proposition}[Existence of a Foster-Lyapunov function]
	\label{prop:lyap:ex} 
	The return times $\tau_A$ to any set $A \in \mathcal{B}(\Delta_2)$ of positive measure, have at most geometric tails, under $P_{x_0}$. As a consequence, the Foster-Lyapunov function exists. 
\end{proposition}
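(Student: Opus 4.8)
The plan is to strengthen the qualitative statement $L(u,A)>0$ of Proposition~\ref{prop:leb:irr} into a quantitative, \emph{uniform} minorization over all starting points, and then to convert that into geometric return tails by a blocking argument; the Lyapunov function is then read off from the expected hitting times. First I would re-examine the proof of Proposition~\ref{prop:leb:irr} and record what it actually delivers: for a Borel set $A$ with $\lambda_2(A)=\alpha>0$ it constructs, in exactly $m:=N-1$ steps (so $m=2$ on $\Delta_2$), a sub-simplex target $C\subseteq A$ of positive measure reachable from \emph{every} starting point, and the final sentence of that proof asserts the hitting probability can be bounded below \emph{independently of $u$}. I would phrase this as: there exist $m\in\N$ and $\eta=\eta(A)>0$ with
\[ \inf_{u\in\Delta_2} P_u\{X_m\in A\}\ \ge\ \eta. \]

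With $m$ and $\eta$ fixed, I would partition the time axis into consecutive blocks of length $m$ and set $E_j=\{X_{(j-1)m+1},\dots,X_{jm}\notin A\}$ for the event that block $j$ misses $A$. Because $\{\tau_A>km\}\subseteq\bigcap_{j=1}^{k}E_j$, the Markov property applied at the block endpoints together with the minorization gives
\[ P_{x_0}\big(E_k\,\big|\,\cF_{(k-1)m}\big)\ \le\ \sup_{u\in\Delta_2}P_u\{X_m\notin A\}\ \le\ 1-\eta, \]
and iterating over $k$ yields $P_{x_0}\{\tau_A>km\}\le(1-\eta)^k$. Splitting a general $t$ as $t=km+r$ with $0\le r<m$ then gives $P_{x_0}\{\tau_A>t\}\le(1-\eta)^{\lfloor t/m\rfloor}$, the asserted geometric tail with rate $\theta=(1-\eta)^{1/m}<1$, and the bound is uniform in the starting point $x_0$.

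Finally I would deduce the Foster--Lyapunov function. The uniform geometric tail gives $\sup_{x}\mathbf E_x(\tau_A)<\infty$; in particular $\rho:=\sup_{x\in A}\mathbf E_x(\tau_A)<\infty$, so the positive-measure set $A$ may serve as the petite set in \eqref{eq:lya}. Taking $V(x)=\mathbf E_x(\sigma_A)$ with $\sigma_A=\inf\{n\ge0:X_n\in A\}$, a one-step decomposition gives $\int P(x,dy)V(y)=V(x)-1$ for $x\notin A$ and $\int P(x,dy)V(y)=\mathbf E_x(\tau_A)-1\le V(x)-1+\rho$ for $x\in A$ (using $V\equiv0$ on $A$), which is exactly the drift inequality \eqref{eq:lya}. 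Since the tail bound is uniform, $V$ is bounded and hence finite everywhere, so the criterion of \cite{Meyn1993} applies; equivalently, one simply invokes the stated equivalence between geometric return tails and the existence of a Foster--Lyapunov function. The genuine difficulty is concentrated in Step~1: the routing and the fragmentation densities in Proposition~\ref{prop:leb:irr} depend on $u$, and one must verify they do not degenerate --- after fixing a coordinate the split is uniform on a segment of length proportional to $1-z_u$, and the normalisation $z_u\le2/3$ keeps this length in $[1/3,1]$, so the induced densities stay bounded and the target $C$ remains reachable for all $u$. Securing this uniformity, by shrinking $C$ while keeping $\lambda_2(C)>0$, is the crux; the blocking and Lyapunov steps are then routine.
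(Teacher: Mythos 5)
Your proposal is correct and takes essentially the same route as the paper: a uniform-in-starting-point lower bound on the probability of hitting a positive-measure subset of $A$ within $N-1$ steps (the paper secures this uniformity exactly as you anticipate, by repeating the construction of Proposition~\ref{prop:leb:irr} over all coordinates to build nested targets $A_N\subseteq A$ with slice measures bounded below), followed by the same blocking/Markov-property iteration giving $P_{x_0}\{\tau_A > km\}\le(1-\eta)^k$. Your explicit verification of the drift inequality \eqref{eq:lya} with $V(x)=\mathbf{E}_x(\sigma_A)$ and petite set $A$ is a minor addition the paper omits, since it simply invokes the previously stated equivalence between geometric return tails and the existence of a Foster--Lyapunov function.
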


\begin{proof}[Proof of Proposition \ref{prop:lyap:ex}]
	Let $A$ be a positive Lebesgue measure set. 
	By repeating the construction in the proof of Proposition \ref{prop:leb:irr} 
	to all coordinates we can find 
	$\alpha_i >0, n_i>0, 1 \le i \le N,  
	\delta>0$ and a rational barycentre $q_0$ and a sequence of measurable sets 
	\[A \supseteq A_1 \supseteq A_2 \supseteq \ldots \supseteq A_N\]
	of positive measure, $\lambda_{N-1}(A_N)=\eta > 0$ 
	and a collection of 1-dimensional measurable sets 
	$F_1, \ldots, F_N$
	with the following properties: 
		\begin{enumerate}
			\item $\lambda_{N-2}\{ A \cap\delta \Delta_2(q_0)\cap \{x_1 = x_1^* \in F_1\}\} =\gamma(x_1^*) > n_1^{-1}, \quad \lambda_1(F_1)\ge \alpha_1$, \\
			$A_1 = \{A \cap\delta \Delta_2(q_0), x_1 \in F_1\}$
			\item $\lambda_{N-2}\{ A_{k-1}\cap\delta \Delta_2(q_0)\cap \{x_k = x_k^* \in F_k\}\} =\gamma(x_k^*) > n_k^{-1}, \quad \lambda_1(F_k)\ge \alpha_k$, \\
			$A_k = \{A_{k-1} \cap\delta \Delta_2(q_0), x_k \in F_k\}, \quad k \ge 2.$
		\end{enumerate} 
		
		The basic property of $A_N$ is that it is accessible with positive probability (that depends on $A$), 
		uniformly bounded from below 
		from any point $u_0 \in \Delta_{N-1}$. 
		Let 
		$a = \min\{a_1,\ldots, a_N,1\}$ and $n_0 = \max\{ n_1,\ldots,n_N\}$. 
		We bound above the probability that we do not hit $A_N$ in the first $N-1$ steps, i.e. 
		$P_{u_0}\{ \tau_{A_N}> N-1\}$. 
		 Suppose we hit in $N-1$ steps or less. Then there is at least one sequence 
		 of $N-1$ coagulation-fragmentation steps for which, 
	if we follow it we land in $A_N$. We select the appropriate pair of indices at each step 
	with probability $1/N(N-1)$ and, given this, 
	we fragment at an appropriate point with probability at least $a$. Therefore 
	\begin{align*}
	\inf_{x \in \Delta_{N-1}}P_x\{ \tau_{A_N} \le N-1\} 
	&
	\ge  \left(\frac{2a}{N(N-1)}\right)^{N-1}
	=\rho_A >0.
	\end{align*}
	Pick a starting point $u_0 \in A$. Then $P_{u_0}( \tau_A > M) \le P_{u_0}( \tau_{A_N} > M)$. 
	We will show that the larger tail is bounded above geometrically by an expression independent of $u_0$.
	We compute  
	\begin{align*}
	P_x\{ \tau_{A_N} > (N-1)M\} &= P_x\{ X_1 \notin A_N,\ldots, X_{(N-1)M} \notin A_N\} \\
		&\le \left(\sup_{u \in \Delta_{N-1}\setminus A_N} P_u\{X_1 \notin A_N,\ldots, X_{N-1} \notin A_N \}\right)^M  \\
		&\le \left(\sup_{u \in \Delta_{N-1}\setminus A_N} P_u\{\tau_{A_N} > N-1\} \right)^M  \\
		&\le (1-\rho_A)^M.
	\end{align*}
	Finally,  since for any $1\le k \le N-1$ we have $ \{ \tau _{A_N} > (N-1)(M+1) \}\subseteq \{ \tau _{A_N} > (N-1)M+ k \} \subseteq \{ \tau_{A_N} > (N-1)M\}$ we conclude that $\tau_{A_N}$ has geometric tails.
	\end{proof}

We assemble these propositions in the following theorem.

\begin{theorem}
	 Let $\mathbf{X} (t)$ denote the coagulation-fragmentation Markov chain defined in Section \ref{sec:DC1} and initial distribution $\mu_0$ on $\Delta_{N-1}$. 
	 Let $\mu_t$ denote the distribution of  $\mathbf{X} (t)$ at time $t \in \N_0$. Then the  uniform  distribution on $\Delta_{N-1}$ is the  unique invariant distribution that can be found as the weak limit of the sequence 
	 $\mu_t$.
\end{theorem}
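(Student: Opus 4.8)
The plan is to read the theorem as a bookkeeping statement that feeds the three preceding propositions into the general ergodic theorem for $\phi$-irreducible chains quoted beneath \eqref{eq:lya} (see \cite{Meyn1993}). Proposition \ref{prop:invdi} already exhibits the uniform law $U[\Delta_{N-1}]$ as \emph{an} invariant distribution, so all that remains is to promote it to \emph{the} invariant distribution and to establish convergence. To invoke the cited theorem I would check three hypotheses: (i) $\phi$-irreducibility with $\phi = \lambda_{N-1}$, which is precisely Proposition \ref{prop:leb:irr}; (ii) aperiodicity; and (iii) the drift inequality \eqref{eq:lya} for a bounded Foster--Lyapunov function $V$ relative to a petite set, which is the content of Proposition \ref{prop:lyap:ex} together with the discussion identifying geometric return tails with \eqref{eq:lya}. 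Granting (i)--(iii), the theorem yields a unique invariant probability $\pi$ and total-variation convergence $\sup_{A}|P^t(x,A)-\pi(A)|\to 0$; uniqueness combined with Proposition \ref{prop:invdi} then forces $\pi = U[\Delta_{N-1}]$, which is stronger than the asserted weak limit.

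The step demanding real care --- and the one the preceding discussion deliberately defers with the phrase that any positive-measure set plays the role of the petite set --- is verifying that positive-$\lambda_{N-1}$-measure sets are genuinely petite. I would establish the sharper statement that the compact simplex $\Delta_{N-1}$ is a small set. Indeed, the route built in the proof of Proposition \ref{prop:leb:irr} terminates by a \emph{uniform} fragmentation, so conditionally on following that route the landing law has a density bounded below by a fixed multiple of $\lambda_{N-1}$ on the sub-simplex $C \subseteq A_N$; refined uniformly over the starting point in the proof of Proposition \ref{prop:lyap:ex}, this gives a genuine minorisation $P^{N-1}(u,\cdot) \ge \rho_A\,\nu(\cdot)$ for every $u \in \Delta_{N-1}$, where $\nu$ is the normalised restriction of $\lambda_{N-1}$ to $C$. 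This uniform minorisation is exactly smallness (hence petiteness) at horizon $N-1$, and it is the real obstacle; everything downstream is formal. Because $\nu$ charges the positive-measure set $C$ and the same bound survives one extra wasted coagulate--refragment step (so $P^{N}(u,\cdot) \ge \rho_A\,\nu(\cdot)$ as well, by integrating the uniform bound over the intermediate point), the chain is small at the two consecutive horizons $N-1$ and $N$; since $\gcd(N-1,N)=1$ this settles aperiodicity, hypothesis (ii).

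For hypothesis (iii) I would take $V(x) := \mathbf E_x(\tau_{A_N})$. The uniform geometric tails produced in Proposition \ref{prop:lyap:ex} give $\sup_{x\in\Delta_{N-1}} V(x) \le M < \infty$, so $V$ is bounded and satisfies the drift inequality \eqref{eq:lya} with the petite set $A_N$; in particular the exceptional set $\{V = \infty\}$ is empty, so the total-variation convergence from the cited theorem holds for \emph{every} starting point $x$. It then remains only to pass from a deterministic start to the arbitrary initial law $\mu_0$: writing $\mu_t(\cdot) = \int_{\Delta_{N-1}} P^t(x,\cdot)\,\mu_0(dx)$ and using the convexity bound $\|\mu_t - \pi\|_{\mathrm{TV}} \le \int_{\Delta_{N-1}} \|P^t(x,\cdot) - \pi\|_{\mathrm{TV}}\,\mu_0(dx)$ together with the pointwise convergence just obtained and the uniform bound $\|P^t(x,\cdot)-\pi\|_{\mathrm{TV}} \le 1$, dominated convergence yields $\|\mu_t - \pi\|_{\mathrm{TV}} \to 0$. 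Hence $\mu_t \Rightarrow U[\Delta_{N-1}]$, which both identifies the unique invariant law and exhibits it as the weak (indeed total-variation) limit of $\mu_t$. The sole genuine difficulty in this program is the uniform minorisation of the second paragraph; the rest is an application of the quoted convergence theorem.
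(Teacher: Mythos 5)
Your proposal is correct and takes essentially the same route as the paper, whose proof is precisely the assembly you describe: Proposition \ref{prop:invdi} gives invariance of the uniform law, Proposition \ref{prop:leb:irr} gives $\phi$-irreducibility, Proposition \ref{prop:lyap:ex} gives the Foster--Lyapunov/geometric-return-time ingredient, and the theorem of \cite{Meyn1993} quoted after \eqref{eq:lya} yields uniqueness and convergence. The only difference is that you verify hypotheses the paper leaves implicit (petiteness via a uniform minorisation, aperiodicity via smallness at the consecutive horizons $N-1$ and $N$, and the passage from a deterministic start to an arbitrary initial law $\mu_0$), which strengthens rather than departs from the paper's argument.
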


\begin{proof} 
From Proposition \ref{prop:invdi} we have that $U[\Delta_{N-1}]$ is an invariant distribution for the process. Since the chain is $\phi$-irreducible as shown in Proposition \ref{prop:leb:irr}, 
uniqueness of the equilibrium follows from the existence of a Foster-Lyapunov 
function, proven in  Proposition \ref{prop:lyap:ex}. 
\end{proof}

\subsection{Kinetic equation as limit of the agent system}

Kinetic equations for the one-agent distribution function with a bilinear
interaction term can be derived using mathematical techniques from the
kinetic theory of rarefied gases \cite{Cerc88,CIP94}.
In this section we discuss how in a time-continuous setting, where the stock (or wealth) of each agent is
a continuous variable $w\in\mathcal{I}=[0,\infty)$, the exchange mechanism
discussed above constitutes a special case of a 
kinetic model for wealth distribution, proposed by Cordier, Pareschi,
Toscani in \cite{CPT05}. In this setting the microscopic dynamics lead to a
    homogeneous Boltzmann-type equation for the distribution function
    of wealth $f=f(w,t)$. One can study the moment evolution of the
    Boltzmann equation to obtain insight into the tail behaviour of
    the cumulative wealth distribution.
We also discuss the grazing  collisions limit which yields a
macroscopic Fokker-Planck-type equation. 

Cordier, Pareschi, Toscani \cite{CPT05} propose a kinetic model for wealth
distribution where wealth is exchanged between individuals through
pairwise (binary) interactions: when two individuals with
pre-interaction wealth $v$ and $w$ 
meet, then their post-trade wealths $v^*$ and $w^*$ are given by
\begin{align}\label{e:exchange}
v^* =(1-\lambda) v +\lambda w + \tilde\eta v,\quad
w^* = (1-\lambda) w + \lambda v + \eta w. 
\end{align}
Herein, $\lambda\in (0,1)$ is a constant, the so-called {\em propensity to invest\/}.
The quantities $\tilde\eta$ and $\eta$ are independent random variables
with the same distribution (usually with mean zero and finite variance
$\sigma^2$). They model randomness in the outcome of the interaction
in a diffusive fashion. 
Note that to ensure that post-interaction wealths remain in the interval
$\mathcal{I}=[0,\infty)$ additional assumptions need to be made.
The discrete exchange dynamics considered in the
previous sections find their continuous kinetic analogue when setting
$\eta=\tilde\eta\equiv 0$ in \eqref{e:exchange}.

With a fixed number $N$ of agents, the interaction \eqref{e:exchange}
induces a discrete-time Markov process on $\mathbb{R}_+^N$ with
$N$-particle joint probability distribution
$P_N(w_1,w_2,\dots,w_N,\tau)$.
One can write a kinetic equation for the one-marginal distribution
function
$$
P_1(w,\tau)=\int P_N(w,w_2,\dots,w_N,\tau)\, dw_2 \cdots dw_N,
$$
using only one- and two-particle distribution functions \cite{Cerc88,CIP94},
\begin{multline*}
P_1(w,\tau+1)-P_1(w,\tau)=\\
\Bigg\langle \frac 1N \Biggl[\int
P_2(w_i,w_j,\tau)\bigl( \delta_0(w-((1-\lambda) w_i +\lambda w_j +
\tilde\eta w_i))+\delta_0(w-((1-\lambda) w_j + \lambda w_i + \eta
w_j)) \bigr)\, dw_i\, dw_j - 2P_1(w,\tau) \Biggr ]\Biggr\rangle.
\end{multline*}
Here, $\langle\cdot\rangle$ denotes the mean operation with respect to
the random variables $\eta,\tilde\eta$. 
This process can be continued to give a hierarchy of equations of so-called
BBGKY-type \cite{Cerc88,CIP94}, describing the dynamics of the system of a
large number of interacting agents.
 A standard approximation is to
neglect correlations between the wealth of agents and assume the
factorization
$$
P_2(w_i,w_j,\tau)=P_1(w_i,\tau)P_1(w_j.\tau).
$$
Standard methods of kinetic theory \cite{Cerc88,CIP94} can be used to
show that, scaling time as $t=2\tau/N$ and taking the thermodynamical
limit $N\to\infty$, one obtains that the time-evolution of the
one-agent distribution function is governed by a
homogeneous Boltzmann-type equation of the form
\begin{multline}
  \label{e:boltzmann}
\frac{\partial }{\partial t}f(w,t) = \\
\frac 12 \Biggl\langle\int
f(w_i,t)f(w_j,t) \bigl( \delta_0(w-((1-\lambda) w_i +\lambda w_j +
\tilde\eta w_i))+\delta_0(w-((1-\lambda) w_j + \lambda w_i + \eta
w_j)) \bigr) \, dw_i\, dw_j \Biggr\rangle - f(w,t).
\end{multline}

Recalling the results from \cite{DuMaTo08,MaTo08}, we have the
following proposition.
\begin{proposition}
The distribution $f(w,t)$ tends to a steady state distribution $f_\infty(w)$ with
an exponential tail.
\end{proposition}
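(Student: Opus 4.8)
The plan is to pass to the weak formulation of \eqref{e:boltzmann} and to exploit that the collision rate does not depend on the interacting wealths (a Maxwell-type model), so that the evolution closes both at the level of moments and under an integral transform. Testing \eqref{e:boltzmann} against an observable $\varphi$ and using the normalisation $\int_0^\infty f\,dw = 1$, one obtains
\[
\frac{d}{dt}\int_0^\infty \varphi(w) f(w,t)\,dw = \frac12\Big\langle \int_0^\infty\!\!\int_0^\infty f(v,t)f(w,t)\big(\varphi(v^*)+\varphi(w^*)-\varphi(v)-\varphi(w)\big)\,dv\,dw\Big\rangle,
\]
with $v^*,w^*$ as in \eqref{e:exchange}. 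Choosing $\varphi(w)=w$ and using $\langle\tilde\eta\rangle=\langle\eta\rangle=0$ gives $\langle v^*+w^*\rangle=v+w$, so the mean wealth $m=\int_0^\infty w f\,dw$ is conserved and fixes the mean of any steady state.

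Next I would establish that a non-degenerate steady state can exist only in a definite parameter regime by tracking the energy. Choosing $\varphi(w)=w^2$ and averaging over the risk variables (variance $\sigma^2$) yields the scalar ODE
\[
\frac{d}{dt}\int_0^\infty w^2 f\,dw = \big(\sigma^2-2\lambda(1-\lambda)\big)\int_0^\infty w^2 f\,dw + 2\lambda(1-\lambda)\,m^2,
\]
so that under the stability condition $\sigma^2<2\lambda(1-\lambda)$ the second moment relaxes exponentially to the finite value $2\lambda(1-\lambda)m^2/\big(2\lambda(1-\lambda)-\sigma^2\big)$. This is the regime in which a genuine steady profile with finite variance exists.

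To turn control of a few moments into convergence of the full law I would use the Fourier (equivalently Laplace) transform $\widehat f(\xi,t)=\int_0^\infty e^{-i\xi w}f(w,t)\,dw$. Because $v^*,w^*$ are affine in $(v,w)$, the transform satisfies a closed Bobylev-type equation
\[
\partial_t \widehat f(\xi,t)+\widehat f(\xi,t)=\big\langle \widehat f(p\xi,t)\,\widehat f(q\xi,t)\big\rangle,
\]
where $p,q$ depend on $\lambda$ and on $\tilde\eta,\eta$. On this equation a Gabetta--Toscani--Wennberg metric $d_s(f,g)=\sup_{\xi\neq0}|\widehat f(\xi)-\widehat g(\xi)|/|\xi|^s$ is a strict contraction for a suitable $s\in(1,2]$ tied to the same threshold, which simultaneously yields existence, uniqueness and exponential convergence of $f(\cdot,t)$ to a steady profile $f_\infty$, identified with the weak limit of the $\mu_t$; this is the programme carried out in \cite{MaTo08,DuMaTo08}.

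Finally, the tail of $f_\infty$ is read off from the stationary equations. Inserting $\varphi(w)=w^s$ into the steady weak form produces a relation for $M_s=\int_0^\infty w^s f_\infty\,dw$ in terms of lower moments with multiplier $\langle p^s+q^s\rangle-1$; showing that this multiplier keeps all integer moments finite, and then upgrading this to finiteness of an exponential moment $\int_0^\infty e^{\theta w}f_\infty\,dw<\infty$ for some $\theta>0$ via an analyticity/super-solution estimate on $\widehat f_\infty$, gives the asserted exponential decay. As an independent check, the grazing-collision limit $\lambda,\sigma^2\to0$ with $\sigma^2/\lambda$ fixed reduces \eqref{e:boltzmann} to a Fokker--Planck equation whose unique stationary state is obtained explicitly by integrating the zero-flux condition, confirming the tail. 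The main obstacle is the convergence step: proving that the Fourier-metric map is a genuine contraction requires relating the contraction rate to the moment bounds (hence to $\sigma^2<2\lambda(1-\lambda)$), and the exponential tail requires propagating an exponential moment along the evolution rather than only finitely many polynomial moments.
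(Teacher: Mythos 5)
Your proposal is correct and, at heart, it follows the same route as the paper: the paper's own proof is essentially a citation of \cite{DuMaTo08,MaTo08}, and what you sketch (conservation of the mean, moment ODEs, the Bobylev transform and a Gabetta--Toscani--Wennberg metric contraction, then a moment recursion governed by a characteristic function) is precisely the program carried out inside those references. The one substantive difference is how the noise is handled. The paper quotes the characteristic function $\mathcal{S}(s)=(1-\lambda)^s+\lambda^s-1$, which is the \emph{noiseless} case $\eta=\tilde\eta\equiv 0$ (the kinetic analogue of the discrete dynamics, as noted after \eqref{e:exchange}); this is negative for all $s>1$ with no parameter restriction, so all moments stay bounded unconditionally. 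You instead keep the random terms of \eqref{e:boltzmann} and correctly derive the stability condition $\sigma^2<2\lambda(1-\lambda)$, which is exactly $\mathcal{S}(2)<0$ for the noisy characteristic $\mathcal{S}(s)=\langle(1-\lambda+\eta)^s\rangle+\lambda^s-1$. This is arguably more faithful to the proposition as stated, and it exposes something the paper's formulation hides: with noise present the exponential-tail conclusion holds only in the thin-tail regime (if $\mathcal{S}$ has a root $s^*>1$ the steady state has a Pareto tail instead, one of the main features of the model of \cite{CPT05}). Two further remarks. First, in the noiseless regime the paper implicitly works in, your second-moment ODE gives $M_2\to m^2$, i.e.\ the steady state is the point mass $\delta_m$, so the ``exponential tail'' claim is true but degenerate there. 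Second, both you and the paper pass from ``all moments of order $s>1$ bounded'' to ``exponential tail''; this implication is not automatic (log-normal-type laws have all moments finite), and the honest course you take --- flagging that one must propagate an exponential moment or an analyticity estimate on $\widehat f_\infty$, again deferring to \cite{MaTo08,DuMaTo08} --- is the right way to close it.
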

\begin{proof}
 The results in \cite{DuMaTo08,MaTo08} imply that $f(w,t)$
tends to a steady distribution $f_\infty(w)$  which depends on the initial distribution only through the conserved mean wealth $M =\int_0^\infty w\,f(w,t)\,dw >0$.
As detailed in \cite{DuMaTo08,MaTo08}, the long-time behaviour of the $s$-th
moment $\int_0^\infty w^s\,f(w,t)\,dw $ is characterised by the function
$  \mathcal{S}(s) =(1-\lambda)^s+\lambda^s- 1$ which is negative for
all $s>1$, hence all $s$-th moments for $s>1$ are bounded, and the tail
of the steady state distribution is exponential.\qed
\end{proof}

In general, such equations like \eqref{e:boltzmann} are rather difficult to treat and it is usual in kinetic
theory to study certain asymptotic limits.
In a suitable scaling limit, a partial differential
equation of Fokker-Planck type can be derived for the distribution of
wealth. Similar diffusion equations are also obtained
in \cite{SlaLav03} as a mean field limit of the Sznajd model \cite{SznSzn00}.
Mathematically, the model is related to works in the
kinetic theory of granular gases \cite{CIP94}.

To this end, we study by formal asymptotics 
the so-called continuous trading
limit ($\lambda \to 0$ while keeping $\sigma_\eta^2/\lambda = \gamma$
fixed).

Let us introduce some notation. First, consider test-functions
$\phi \in\mathcal{C}^{2,\delta}([0,\infty))$ for some $\delta>0$. We use
the usual H\"older norms 
\begin{align*}
 \|\phi\|_{\delta} = \sum_{|\alpha|\leq 2}\|D^\alpha\phi\|_{\mathcal{C}}+\sum_{\alpha=2}[D^\alpha\phi]_{\mathcal{C}^{0,\delta}},
\end{align*}
where $[h]_{\mathcal{C}^{0,\delta}} = \sup_{v\neq w}{|h(v)-h(w)|}/{|v-w|^\delta}.$
Denoting by 
$\mathcal{M}_0(A)$, $A\subset\mathbb{R},$ the space of
probability measures on $A$, we define by
$$ 
\mathcal{M}_p(A)=\bigg\{\Theta\in\mathcal{M}_0\;\bigg|\;\int_A
|\eta|^pd\Theta(\eta)<\infty,\,p\geq 0\bigg.\bigg\}
$$
the space of measures with finite $p$th moment. In the following all our probability densities belong to $\mathcal{M}_{2+\delta}$ and we assume that the density $\Theta$ is obtained from a random variable $Y$ with zero mean and unit variance. We then obtain
\begin{align}\label{e:expvalue}
 \int_{\mathcal{I}}|\eta|^p\Theta(\eta)\;d\eta = {\mathrm E}[|\sigma_\eta  Y|^p]=\sigma_\eta^p{\mathrm E}[|Y|^p],
\end{align}
where ${\mathrm E}[|Y|^p]$ is finite.
The weak form of \eqref{e:boltzmann} is given by
\begin{equation}
\label{weakform}
\frac{d}{dt}\int_{\mathcal{I}} f(w,t) \phi(w)\,dw
=\int_{\mathcal{I}}
\mathcal{Q}(f,f)(w)\phi(w)\,dw
\end{equation}
where 
\begin{align*}
\begin{split}
\int_{\mathcal{I}}&\mathcal{Q}(f,f)(w)\phi(w)\,dw {}\\
&=\frac12\Big\langle \int_{\mathcal{I}^2}  \bigl(  \phi(w^*)+\phi(v^*)-\phi(w)-\phi(v)\bigr)
f(v) f(w) \,dv \, dw\Big\rangle .
\end{split}
\end{align*}
Here, $\langle\cdot\rangle$ denotes the mean operation with respect to
the random variables $\eta,\tilde\eta$.
To study the situation for large times, i.e.\ close to the steady
state, we introduce for $\lambda\ll 1$ the transformation
$\tilde t=\lambda t, \; g(w,\tilde t )=f(w,t).$
This implies $f(w,0)=g(w,0)$ and the evolution of the scaled
density $g(w,\tilde t)$ follows (we immediately drop the tilde in the
following)
\begin{equation}
\label{weakform2}
\frac{d}{dt}\int_{\mathcal{I}} g(w,t)
\phi(w)\,dw=\\
\frac1{\lambda}\int_{\mathcal{I}}
\mathcal{Q}(g,g)(w)\phi(w)\,dw.
\end{equation}
Due to the interaction rule \eqref{e:exchange}, it holds
\begin{align*}
w^*-w = \lambda (v-w) + \eta w.
\end{align*}
A Taylor expansion of $\phi$ up to second order around $w$ of the right
hand side of \eqref{weakform2} leads to
\begin{align*}
&\Big\langle \frac{1}{\lambda}\int_{\mathcal{I}^2 }\phi'(w)\left[\lambda (v-w) + \eta w\right]g(w)g(v)\,dv\,dw\Big\rangle\\
&\phantom{xxxxxxx}+\Big\langle \frac{1}{2\lambda}\int_{\mathcal{I}^2}\phi''(\tilde{w})\left[\lambda (v-w) + \eta w\right]^2g(w)g(v)\,dv\,dw\Big\rangle\\
=&\Big \langle\frac{1}{\lambda}\int_{\mathcal{I}^2 }\phi'(w)\left[\lambda (v-w) + \eta w\right]g(w)g(v)\,dv\,dw\Big \rangle\\
&\phantom{xxxxxxx}+\Big \langle \frac{1}{2\lambda}\int_{\mathcal{I}^2 }\phi''({w})\big[\lambda (v-w) + \eta w\big]^2g(w)g(v)\,dv\,dw\Big \rangle +R(\lambda,\sigma_\eta)\\
=&-\int_{\mathcal{I}^2}\phi'(w)(w-v)g(w) g(v)\,dv\,dw\\
&\phantom{xxxxxxx}+ \frac{1}{2\lambda}\int_{\mathcal{I}^2}\phi''(w)\big[\lambda^2
(v-w)^2+\lambda\gamma w^2\big]g(w)g(v)\,dv\,dw+R(\lambda,\sigma_\eta),
\end{align*}
with $\tilde{w}=\kappa w^*+(1-\kappa)w$ for some $\kappa\in [0,1]$ and
\begin{equation*}
R(\lambda,\sigma_\eta)=\Big\langle
\frac{1}{2\lambda}\int_{\mathcal{I}^2}(\phi''(\tilde{w})-\phi''(w))  \left[\lambda (v-w) + \eta w\right]^2g(w)g(v)\,dv\,dw\Big\rangle.
\end{equation*}
Now we consider the limit $\lambda, \sigma_\eta  \to 0$ while keeping $\gamma=\sigma_\eta^2/\lambda$
fixed. 
It can be seen that the remainder term $R(\gamma,\sigma_\eta)$
vanishes in this limit, see \cite{CPT05} for details. 
In the same limit, the term on the right
hand side of \eqref{weakform2} converges to
\begin{multline*}
-\int_{\mathcal{I}^2}\phi'(w)(w-v)g(w)g(v)\,dv\,dw
+\frac{1}{2}\int_{\mathcal{I}^2}\phi''(w)\gamma
  w^2g(w)g(v)\,dv\,dw\\
=-\int_{\mathcal{I}}\phi'(w)(w-m)g(w)\,dw
+\frac{\gamma}{2}\int_{\mathcal{I}}\phi''(w)
w^2g(w)\,dw,
\end{multline*}
with $m = \int_{\mathcal{I}} v g(v) \,dv$ being the mean wealth (the mass
is set to one for simplicity, otherwise it would appear as well here).
After integration by parts we obtain the right hand side of (the weak form of) the Fokker-Planck equation
\begin{equation}
\label{e:fokkerplanck}
\frac{\partial}{\partial t}g(w,t)=\frac{\partial}{\partial w}\Big((w-m)g(w,t)\Big){}
+\frac{\gamma}{2}\frac{\partial^2}{\partial w^2}\Big(w^2g(w,t) \Big),
\end{equation}
subject to no flux boundary conditions (which result from the
integration by parts). The same equation has also been obtained by
considering the mean-field limit in a trading model described by
stochastic differential equations \cite{BouMez00}.

\section{Remembering Jun-ichi Inoue}

One of us (Enrico Scalas) was expecting to meet Jun-ichi Inoue at the 2015 AMMCS-CAIMS Congress in Waterloo, Ontario, Canada. Together with Bertram D\"uring (also co-author of this paper), we organised a special session entitled {\em Wealth distribution and statistical equilibrium in economics} (see: {\tt http://www.ammcs-caims2015.wlu.\-ca/special-sessions/wdsee/}). Even if Enrico never collaborated with Jun-ichi on the specific problem discussed in this paper, they co-authored two research papers, one on the non-stationary behavior of financial markets \cite{Jun-ichi1} and another one on durations and the distribution of first passage times in the FOREX market \cite{Jun-ichi2}. The former was the outcome of a visit of Jun-ichi to the Basque Center for Applied Mathematics in Bilbao from 3 October 2011 to 7 October 2011. Enrico, Jun-ichi and Giacomo Livan met several times in front of blackboards and computers and the main idea of the paper (nonstationarity of financial data) was suggested by Jun-ichi. The latter is the result of a collaboration with Naoya Sazuka who, among other things, provided the data from Sony Bank FOREX transactions. This paper is connected to Enrico's activity on modelling high-frequency financial data with continuous-time random walks. A third review paper was published on the role of the inspection paradox in finance \cite{Jun-ichi3}.  Before leaving for Canada, Enrico received the sad news of Jun-ichi's death. He had the time to change his presentation in Waterloo to include a short commemoration of Jun-ichi. With Jun-ichi, Enrico lost not only a collaborator, but a friend with an inquisitive mind.

\section*{Acknowledgments}
BD acknowledges support by the Leverhulme Trust research project grant
“Novel discretisations for higher-order nonlinear PDE”
(RPG-2015-69). NG acknowledges support by the EPSRC grant
EP/P021409/1. We would also like to thank Fei Cao for noting an error
in a previous version of this paper which is now corrected. 




\end{document}